\pdfoutput=1
\documentclass[11pt]{article}
\usepackage[letterpaper,margin=1in]{geometry}
\usepackage[T1]{fontenc}
\usepackage{amsmath}
\usepackage{amssymb}
\usepackage{amsthm}
\usepackage{mathtools}
\usepackage{newtxtext}
\usepackage{newtxmath}
\usepackage{bm}
\usepackage{booktabs}
\usepackage{threeparttable}
\usepackage{array}
\usepackage{graphicx}
\usepackage{float}
\usepackage{placeins}
\usepackage{algorithm}
\usepackage[noend]{algpseudocode}
\usepackage{tikz}
\usetikzlibrary{arrows.meta,calc,positioning}
\usepackage{pgfplots}
\usepackage{xcolor}
\usepackage{caption}
\usepackage{natbib}
\usepackage{hyperref}
\usepackage[nameinlink,capitalize]{cleveref}
\crefname{equation}{Equation}{Equations}
\Crefname{equation}{Equation}{Equations}

\pgfplotsset{compat=1.16}
\hypersetup{
  colorlinks=true,
  linkcolor=blue!45!black,
  citecolor=blue!45!black,
  urlcolor=blue!45!black
}
\newtheorem{theorem}{Theorem}
\newtheorem{lemma}{Lemma}
\newtheorem{proposition}{Proposition}
\newtheorem{corollary}{Corollary}

\makeatletter
\providecommand*{\theHALG@line}{\thealgorithm.\arabic{ALG@line}}
\makeatother

\providecommand{\R}{\mathbb{R}}
\providecommand{\conv}{\operatorname{conv}}
\providecommand{\norm}[1]{\lVert #1\rVert}
\providecommand{\Q}{\mathcal{Q}}

\providecommand{\vct}[1]{\bm{#1}}
\DeclareMathOperator*{\argmin}{arg\,min}

\newcommand{\RealEndpointPrimaryRuns}{570}

\newcommand{\JuliaRealCRSpeed}{1.38}

\newcommand{\MatlabRealCRSpeed}{1.15}

\newcommand{\ROPrimaryObjectiveSolves}{11,520}
\newcommand{\ROSimplexObjectiveSolves}{1,440}

\newcommand{\ROJuliaCRSpeedupMany}{1.85}
\newcommand{\ROJuliaCRSpeedupManyLower}{1.74}
\newcommand{\ROJuliaCRSpeedupManyUpper}{1.98}

\newcommand{\ROMatlabCRSpeedupMany}{1.48}
\newcommand{\ROMatlabCRSpeedupManyLower}{1.39}
\newcommand{\ROMatlabCRSpeedupManyUpper}{1.57}

 \newcommand{\ROPortfolioObjectiveSolves}{480}

\newcommand{\ROPortfolioJuliaCRSpeedup}{0.90}
\newcommand{\ROPortfolioMatlabCRSpeedup}{0.79}

\newcommand{\ROPortfolioHighsSpeedup}{9.32}

 \newcommand{\JuliaAffineAssociation}{0.83}
\newcommand{\JuliaAffineAssociationLower}{0.76}
\newcommand{\JuliaAffineAssociationUpper}{0.88}
\newcommand{\JuliaScanAssociation}{0.81}
\newcommand{\JuliaScanAssociationLower}{0.73}
\newcommand{\JuliaScanAssociationUpper}{0.86}
\newcommand{\MatlabAffineAssociation}{0.74}
\newcommand{\MatlabAffineAssociationLower}{0.64}
\newcommand{\MatlabAffineAssociationUpper}{0.81}
\newcommand{\MatlabScanAssociation}{0.84}
\newcommand{\MatlabScanAssociationLower}{0.75}
\newcommand{\MatlabScanAssociationUpper}{0.90}
 \newcommand{\EToEJuliaPCRGM}{1.43}
\newcommand{\EToEMatlabPCRGM}{1.54}
\newcommand{\EToEJuliaHighsCRGM}{3.48}
\newcommand{\EToEMatlabHighsCRGM}{2.36}
\newcommand{\EToETimedRows}{720}
\newcommand{\EToERowsPerSolver}{240}
\newcommand{\EToERegressionRows}{24}
\newcommand{\EToEBoundaryCases}{3}
\newcommand{\EToEMaxObjectiveDisagreement}{1.483\times10^{-13}}

\hypersetup{
  pdftitle={Row-Polar LP-Newton for Linear Programming with Corral Repair},
  pdfsubject={Optimization Online research report},
  pdfkeywords={linear programming; row-polar formulation; LP reoptimization; changing objectives; LP-Newton method; corral repair; minimum-norm point},
  pdfauthor={Yanfei Li, Yuki Matsuno, and Jianming Shi}
}

\title{Row-Polar LP-Newton for Linear Programming with Corral Repair}
\author{Yanfei Li\thanks{Shandong University, China.}
\and Yuki Matsuno\thanks{Seikei University, Japan.}
\and Jianming Shi\thanks{Shandong University, China; Seikei University,
Japan; and Tokyo University of Science, Japan. Corresponding author:
\texttt{shi@rs.tus.ac.jp}.}}
\date{September 3, 2026 (Revised September 8, 2026)}

\begin{document}
\maketitle

\begin{abstract}
LP-Newton solves a linear program through a sequence of nearest-point problems.
Given an interior feasible point for an inequality-form LP, we construct the
compact hull formed by the origin and its normalized constraint rows.  Polar LP-Newton (P-LPN)
follows the objective ray to the boundary of this row-polar hull, where it
recovers a primal--dual optimum or a recession direction proving unboundedness.
For rational row-polar data, we prove an outer-iteration bound quadratic in
dimension and linear in binary input length, excluding inner Wolfe work.

Restarting Wolfe at every outer iteration discards its terminal corral even
though the hull is unchanged and the next target lies on the same ray.
Corral-repair P-LPN (CR-P-LPN) instead repairs that corral and uses it to start
the next projection.  Verification over the full hull preserves P-LPN's
projections, outer targets, and LP conclusion in exact arithmetic.

Experiments in Julia and MATLAB show where repair helps.  After common
initialization, median P-LPN times across 180 single-LP tests are 1.62 and 1.55
times the corresponding CR-P-LPN times.  On three problems with
5,000--100,000 rows, the geometric-mean factors are 2.03 and 1.46.  A separate
end-to-end study includes initialization on 19 application-derived LPs.
CR-P-LPN has lower median total time than the faster cold HiGHS mode---dual
simplex or the interior-point method with crossover---on 18 workloads in each
language; the geometric-mean HiGHS/CR-P-LPN ratios are 4.93 and 3.06.
Component comparisons are consistent with terminal-corral reuse as the useful
component, whereas candidate-first ordering alone shows no consistent gain.
Repair is not always beneficial: it is slower on Klee--Minty tests,
and an author-implemented simplex is faster on the retained Netlib models.
\end{abstract}

\noindent\textbf{Keywords:} linear programming; row-polar formulation; LP reoptimization; changing objectives; LP-Newton method; corral repair; minimum-norm point
\medskip

\section{Introduction}\label{sec:introduction}

LP-Newton approaches linear programming through repeated nearest-point
projections.  We develop a version for inequality-form LPs with many
constraints.  A known interior feasible point provides positive slacks; after
translation and row normalization, the constraints become
\(A\vct{x}\leq\mathbf 1\).  The origin together with the rows of \(A\) forms a
compact row-polar hull \(\Q\), whether or not the original feasible region is
bounded.  Polar LP-Newton (P-LPN) searches for the endpoint of the objective ray
in \(\Q\).  A positive endpoint recovers a primal--dual optimum, while a zero
endpoint supplies a recession direction and proves unboundedness.

Each P-LPN iteration projects a point on the objective ray onto \(\Q\).  Wolfe's
algorithm ends this computation at a \emph{corral}: an affinely independent
active set whose affine minimizer is a strictly positive convex combination of
its generators.  Unless P-LPN has already stopped, the projection residual
defines a supporting hyperplane; where that hyperplane meets the ray is the next
target.  The target moves, but the hull does not.  This simple observation is
the reason to retain Wolfe's terminal state.

Our corral-repair method, CR-P-LPN, adjusts the preceding corral to the new
target and restarts Wolfe from that set.  Before accepting the computed point,
the method either establishes that the target belongs to \(\Q\) or checks
Wolfe's inequality for every generator.  If the warm computation stops before
completing this verification, unrestricted Wolfe recomputes the projection
from a single generator.  Corral repair therefore changes only the
initialization of each projection.  In exact
arithmetic, P-LPN and CR-P-LPN generate the same outer targets and return the
same optimum or unboundedness conclusion.

The experiments separate projection performance from initialization cost.
Phase-II tests begin with the same normalized problem and compare P-LPN with
CR-P-LPN; component variants isolate corral repair.  End-to-end tests add
artificial Phase I, normalization, inverse transformation, and verification,
and compare both row-polar methods with cold HiGHS on the same original data.
Additional tests identify cases in which repair costs more than the Wolfe work
it saves.

The main contributions are:

\begin{enumerate}
  \item \emph{Row-polar formulation and outer-iteration theory.}  We express an
        inequality-form LP with a known interior point as a boundary search on a
        compact hull of normalized constraint rows.  The endpoint recovers an
        optimum or proves unboundedness.  For rational row-polar data, P-LPN
        needs \(O(n^2\mathcal L)\) outer iterations, excluding inner Wolfe work;
        \(\mathcal L\) is the binary length of the row-polar input.
  \item \emph{Verified terminal-corral repair.}  CR-P-LPN carries a terminal
        Wolfe corral from one projection to the next, repairs it, and verifies
        the result over the full hull.  It therefore computes the same outer
        targets and LP outcome as P-LPN in exact arithmetic and inherits its
        outer-iteration bound.  We also bound the arithmetic work and storage
        of an implementation that retrieves inactive generators from \(A\) and
        stores only active ones.
  \item \emph{Computational evidence.}  Paired Julia and MATLAB experiments
        compare P-LPN and CR-P-LPN after common initialization and in
        end-to-end solves; component variants examine repair separately.
        On 18 of 19 application-derived workloads in each language, CR-P-LPN
        has lower median total time than the faster cold HiGHS mode (dual
        simplex or the interior-point method with crossover).  Other tests
        reveal slowdowns as well as gains.  A secondary experiment studies
        corral transfer across a finite sequence of objectives.
\end{enumerate}

Section~2 reviews related work; Sections~3--6 develop and analyze P-LPN and
CR-P-LPN; Section~7 reports the experiments; and Sections~8--9 discuss the
results and conclude.  Appendices~\ref{app:outer-details}--\ref{app:numerical-details}
make the report self-contained by giving the counting details, optional
projection analyses, two-stage initialization, verification rules, comparator
settings, and complete supporting tables.

\section{Related Work}\label{sec:related}

The closest predecessors are LP-Newton methods.  \citet{FujishigeEtAl2009}
map box-constrained equality-form LPs to a zonotope and prove finite
termination.  Their analysis follows terminal corrals through successive
projections \citep[Theorem~3.11]{FujishigeEtAl2009}, although the algorithm
restarts each projection.  Later variants treat standard-form LPs through a
related cone \citep{KitaharaEtAl2013}, add binary search to zonotope projections
\citep{KitaharaSukegawa2019}, and extend LP-Newton to conic programs through a
semi-infinite representation \citep{TanakaOkuno2021}.  P-LPN keeps the repeated
projection and radial search, but changes the projection domain to the compact
hull of normalized inequality rows.  Its \(O(n^2\mathcal L)\) outer-iteration
bound builds on the scalar distance interpretation in
\citet[Remark~3.5]{FujishigeEtAl2009}.

The earlier random experiments found zonotope LP-Newton competitive with MATLAB
\texttt{linprog} only at the smallest tested size
\citep[Table~1]{FujishigeEtAl2009}.  Those timings come from a different model,
implementation, and machine.  Our current general-purpose reference is HiGHS,
which is also the default dual-simplex backend of MATLAB R2025b
\texttt{linprog}; Section~7 times cold HiGHS and the two row-polar methods on
common data and hardware.

LP sensitivity and reoptimization motivate retaining solver state when the
feasible region stays fixed.  When only the objective changes, simplex can
start from the preceding basis \citep{Dantzig1963}.  In the secondary extension
studied here, RO-CR-P-LPN offers the preceding terminal corral to the first
projection for the next objective.  In both methods, the retained state is
only a starting point; optimality must be established again.

Within each projection, P-LPN uses Wolfe's minimum-norm-point algorithm, which
returns both the projection and its terminal corral \citep{Wolfe1976}.
Update-and-stabilize modifies an active set while solving one such problem
\citep{FujishigeEtAl2025}; CR-P-LPN carries a terminal corral to the next
projection problem.  Section~5 briefly examines two suggestions from
\citet{Fujishige2019}: weighting the projection metric and testing selected
generators first.  The final method retains Euclidean projection and uses the
second suggestion only to choose the order of verification after repair.
Appendix~\ref{app:projection-details} contrasts this construction with dual minimum-norm,
nearest-pair, and affine-restriction methods
\citep{FujishigeZhan1990,FujishigeZhan1992,FujishigeEtAl1994}.

Endpoint recovery rests on classical polarity and LP duality
\citep{Schrijver1986}.  A nearby geometric idea is the projective cutting-plane
method, which reaches the boundary of an inequality-described polytope by a
directional projection from an interior point \citep{Porumbel2022}.  P-LPN
instead reaches the boundary of a generator-described row-polar hull through a
sequence of Euclidean nearest-point projections.

\section{Row-Polar Formulation and LP Recovery}\label{sec:formulation}

\smallskip
\noindent\textbf{Problem and normalized formulation.}
Consider the original LP
\[
 (\mathrm P^0)\qquad
 \max\{(\vct{c}^{\,0})^T\vct{x}^{\,0}:
 A^0\vct{x}^{\,0}\leq\vct{b}^{\,0}\},
\]
where \(A^0\in\R^{m\times n}\), \(\vct{b}^{\,0}\in\R^m\), and
\(\vct{c}^{\,0}\in\R^n\setminus\{\vct{0}\}\).  Throughout,
\(\norm{\cdot}\) denotes the Euclidean norm.  Suppose that a strictly feasible
point \(\bar{\vct{x}}^{\,0}\) is given.  Define
\[
 \vct{s}^{\,0}:=\vct{b}^{\,0}-A^0\bar{\vct{x}}^{\,0}>\vct{0},
 \qquad S:=\operatorname{Diag}(\vct{s}^{\,0}),
\]
and set
\[
 \vct{x}:=\vct{x}^{\,0}-\bar{\vct{x}}^{\,0},\qquad
 A:=S^{-1}A^0,\qquad
 \vct{c}:=\frac{\vct{c}^{\,0}}{\norm{\vct{c}^{\,0}}}.
\]
Mathematically, the original LP \((\mathrm P^0)\) is equivalent to
\begin{equation}\label{eq:primal}
 (\mathrm P)\qquad
 \max\{\vct{c}^T\vct{x}:A\vct{x}\leq\mathbf 1\},
 \qquad \norm{\vct{c}}=1.
\end{equation}
An optimum \(\vct{x}^*\) of \((\mathrm P)\) gives the original solution
\(\vct{x}^{\,0*}:=\bar{\vct{x}}^{\,0}+\vct{x}^*\).  We may therefore analyze
\((\mathrm P)\), whose origin is strictly feasible.  Appendices~\ref{app:outer-details} and~\ref{app:verification} give the remaining inverse transformations and
original-scale verification rules.  When no strictly feasible point is known,
the artificial Phase-I procedure in Appendix~\ref{app:initialization} either finds
one, reduces the problem to its affine hull, or detects infeasibility.

Let \(\vct{a}_i^T\) denote row \(i\) of the normalized matrix \(A\), and define
\begin{equation}\label{eq:polarvertices}
  \Q:=\conv\{\vct{0},\vct{a}_1,\ldots,\vct{a}_m\}.
\end{equation}
We call these \(m+1\) points the generator list.  Define
\begin{equation}\label{eq:rho}
  \rho^*:=\max\{\rho\geq0:\rho\vct{c}\in\Q\}.
\end{equation}
Compactness of \(\Q\) and the inclusion \(\vct{0}\in\Q\) make the maximum
well defined.

\begin{figure}[H]
\centering
\resizebox{0.98\textwidth}{!}{\vbox{\vskip3pt\hbox{%
\begin{tikzpicture}[>=stealth]
\tikzset{stage/.style={draw=black!55,fill=black!3,align=center,
  minimum height=1.55cm,text width=2.85cm,font=\scriptsize,inner sep=4pt}}
\node[stage] (input) at (-0.55,0)
  {\textbf{Normalized LP}\\
   \(A\vct{x}\leq\mathbf 1\), \(\norm{\vct{c}}=1\)};
\node[stage] (hull) at (3.60,0)
  {\textbf{Row-polar hull}\\
   \(\Q=\conv\{\vct{0},\vct{a}_1,\ldots,\vct{a}_m\}\)};
\node[stage] (project) at (7.75,0)
  {\textbf{P-LPN projections}\\project objective-ray targets onto \(\Q\);
   CR-P-LPN repairs the preceding corral};
\node[stage] (recover) at (11.90,0)
  {\textbf{Radial endpoint}\\\(\rho^*>0\): recover
   primal--dual optima\\\(\rho^*=0\): recover a recession direction};
\draw[->,thick,black!65] (input) -- node[above,font=\tiny]{form} (hull);
\draw[->,thick,black!65] (hull) -- node[above,font=\tiny]{project} (project);
\draw[->,thick,black!65] (project) -- node[above,font=\tiny]{recover} (recover);
\end{tikzpicture}}}}
\caption{Row-polar LP-Newton workflow from the normalized problem.  During one LP solve,
the hull and objective ray remain fixed;
only the target radius changes between projections.
\label{fig:row-polar-workflow}}
\begin{minipage}{0.98\textwidth}
\scriptsize The endpoint formulas are given in
\cref{thm:polar-equivalence}; corral repair is developed in
\cref{sec:corralreuse}.
\end{minipage}
\end{figure}
\FloatBarrier

For a set \(\mathcal S\subseteq\R^n\), write
\[
 \mathcal S^\circ:=
 \{\vct{q}:\vct{q}^T\vct{w}\leq1\;\forall\vct{w}\in\mathcal S\},
 \qquad
 \{\vct{x}:A\vct{x}\leq\mathbf 1\}^\circ=\Q.
\]
The second relation is the standard polar formula for an intersection of
halfspaces; in particular, the origin in \cref{eq:polarvertices} is essential.

\smallskip
\noindent\textbf{Radial endpoint and LP recovery.}

Polarity and a supporting normal recover an optimal solution directly.  To verify optimality through the standard primal--dual conditions, consider the
normalized dual
\[
 \min\{\mathbf 1^T\vct{u}:A^T\vct{u}=\vct{c},\ \vct{u}\geq\vct{0}\}.
\]
P-LPN never optimizes in the \(m\)-dimensional dual space.  It instead finds the
largest \(\rho\) for which \(\rho\vct{c}\in\Q\subset\R^n\); at a positive
endpoint, Wolfe's row weights rescale to a dual optimum.

\begin{theorem}[Radial outcome and solution recovery for \((\mathrm P)\)]\label{thm:polar-equivalence}
Under the normalization above, exactly one of the following holds.
\begin{enumerate}
  \item If \(\rho^*>0\), then \((\mathrm P)\) has the finite optimal value
  \begin{equation}\label{eq:value}
    \max\{\vct{c}^T\vct{x}:A\vct{x}\leq\mathbf 1\}
    =\frac{1}{\rho^*}.
  \end{equation}
  Define \(\vct{q}^*:=\rho^*\vct{c}\).  If
  \(\vct{d}^T\vct{q}\leq\vct{d}^T\vct{q}^*\) supports \(\Q\) at
  \(\vct{q}^*\), with
  \(\vct{d}^T\vct{q}^*>0\), then
  \begin{equation}\label{eq:primalrecover}
    \vct{x}^*:=\frac{\vct{d}}{\vct{d}^T\vct{q}^*}
  \end{equation}
  is optimal for \((\mathrm P)\).  If
  \(\vct{q}^*=\sum_{i=1}^m\lambda_i\vct{a}_i\), with
  \(\lambda_i\geq0\) and \(\sum_i\lambda_i=1\), define
  \begin{equation}\label{eq:dualrecover}
    u_i^*:=\frac{\lambda_i}{\rho^*}
    \quad(i=1,\ldots,m).
  \end{equation}
  Then, for any such \(\vct{d}\) and weights \(\lambda_i\),
  \((\vct{x}^*,\vct{u}^*)\) is a primal--dual optimum for \((\mathrm P)\).
  \item If \(\rho^*=0\), then \((\mathrm P)\) is unbounded above.
  Moreover, a vector \(\vct{d}\) satisfying
  \(\vct{d}^T\vct{q}\leq0\) for every \(\vct{q}\in\Q\) and
  \(\vct{d}^T\vct{c}>0\) gives the recession direction
  \(\vct{r}:=\vct{d}/(\vct{c}^T\vct{d})\), for which
  \(A\vct{r}\leq\vct{0}\) and \(\vct{c}^T\vct{r}=1\).
\end{enumerate}
\end{theorem}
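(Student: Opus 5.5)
The plan is to combine the polar identity \(\{\vct{x}:A\vct{x}\leq\mathbf 1\}^\circ=\Q\) with weak duality for the normalized pair. The first step is a pointwise equivalence: for \(\rho>0\),
\[
 \rho\vct{c}\in\Q\iff \vct{c}^T\vct{x}\leq 1/\rho\ \text{for all feasible}\ \vct{x}.
\]
This is the definition of the polar of \(P:=\{\vct{x}:A\vct{x}\leq\mathbf 1\}\), applied to the vector \(\rho\vct{c}\). The polar identity itself follows from Farkas' lemma, or from the bipolar theorem, because \(\vct{0}\in\operatorname{int}P\). Hence \(\sup_P\vct{c}^T\vct{x}=\inf\{1/\rho:\rho>0,\ \rho\vct{c}\in\Q\}\). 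If \(\rho^*>0\), this gives value \(\le 1/\rho^*\). If \(\rho^*=0\), no positive multiple of \(\vct{c}\) lies in \(\Q\), so the supremum is \(+\infty\). The two cases are exclusive because \(\rho^*\) is a single number. Rather than rely only on this, I will also exhibit certificates explicitly.

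For case 1, I first check that the recovered pair is feasible. Dual feasibility comes from \(\vct{q}^*=\sum\lambda_i\vct{a}_i=\rho^*\vct{c}\). Dividing by \(\rho^*\) gives \(A^T\vct{u}^*=\vct{c}\) and \(\vct{u}^*\ge0\), with \(\mathbf 1^T\vct{u}^*=1/\rho^*\). Primal feasibility of \(\vct{x}^*\) uses the supporting inequality at the generators: \(\vct{d}^T\vct{a}_i\le\vct{d}^T\vct{q}^*\). Dividing by \(\vct{d}^T\vct{q}^*>0\) gives \(\vct{a}_i^T\vct{x}^*\le1\). The objective is \(\vct{c}^T\vct{x}^*=\vct{d}^T\vct{c}/\vct{d}^T\vct{q}^*=1/\rho^*\), using \(\vct{q}^*=\rho^*\vct{c}\). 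Primal and dual objectives are equal, so weak duality gives optimality of both and value \(1/\rho^*\). This holds for any admissible \(\vct{d}\) and \(\lambda\).

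Two small points need care. First, the decomposition of \(\vct{q}^*\) in the statement omits the generator \(\vct{0}\), so \(\sum_{i\ge1}\lambda_i=1\) is an assumption of the statement and the computation above only uses it as given. Second, I should confirm that a supporting \(\vct{d}\) with \(\vct{d}^T\vct{q}^*>0\) exists, so the statement is not vacuous. The point \(\vct{q}^*\) lies on the boundary of \(\Q\) by maximality of \(\rho^*\), so a supporting hyperplane exists. Since \(\vct{0}\in\Q\), we get \(\vct{d}^T\vct{q}^*\ge\vct{d}^T\vct{0}=0\). Equality would make \(\vct{d}\) orthogonal to \(\vct{c}\) while supporting at \(\vct{q}^*\). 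In that situation I would instead take \(\vct{d}\) to be an optimal \(\vct{x}\), which exists by LP theory since the value is finite, and which supports with positive value \(1\).

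For case 2, let \(\vct{d}\) satisfy \(\vct{d}^T\vct{q}\le0\) on \(\Q\) and \(\vct{d}^T\vct{c}>0\). Evaluating at the generators gives \(A\vct{d}\le\vct{0}\), and scaling gives \(A\vct{r}\le\vct{0}\) with \(\vct{c}^T\vct{r}=1\). Since \(\vct{0}\) is feasible, \(t\vct{r}\) is feasible for all \(t\ge0\) with objective \(t\), so \((\mathrm P)\) is unbounded.

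I must also show such a \(\vct{d}\) exists when \(\rho^*=0\); I expect this to be the main obstacle. The ray \(\{t\vct{c}:t>0\}\) is disjoint from \(\Q\), and \(\Q\) is a polytope containing \(\vct{0}\). I will separate the convex cone generated by the \(\vct{a}_i\), arguing that it cannot contain \(\vct{c}\). If \(\vct{c}=\sum\mu_i\vct{a}_i\) with \(\mu\ge0\) and \(\mu\neq0\), then \(\vct{c}/\sum\mu_i\) is a convex combination of generators and so lies in \(\Q\), contradicting \(\rho^*=0\). Farkas' lemma then yields \(\vct{d}\) with \(A\vct{d}\le\vct{0}\) and \(\vct{c}^T\vct{d}>0\), and this \(\vct{d}\) is nonpositive on all of \(\Q\).
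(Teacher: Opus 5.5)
Your proposal is correct and follows essentially the same route as the paper. The steps match: polarity gives the bound \(\vct{c}^T\vct{x}\leq1/\rho^*\), the normalized supporting normal is the primal certificate, the rescaled row weights are the dual certificate closed by weak duality, and for \(\rho^*=0\) separating \(\vct{c}\) from the cone generated by the rows yields the recession direction. The differences are only in side details. You justify two things the paper takes for granted: that a supporting normal with \(\vct{d}^T\vct{q}^*>0\) exists (taking \(\vct{d}\) to be an optimal solution when needed), and why \(\vct{c}\) lies outside that cone. Conversely, the paper proves something you only assume: using the same rescaling argument as your case 2, it shows that \(\vct{q}^*\) always has a representation with zero weight on the origin.
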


\begin{proof}
Suppose \(\rho^*>0\), and choose a supporting normal \(\vct{d}\) with
\(\vct{d}^T\vct{q}^*>0\) at the nonzero endpoint.  For every feasible
\(\vct{x}\), polarity gives
\((\rho^*\vct{c})^T\vct{x}\leq1\), hence
\(\vct{c}^T\vct{x}\leq1/\rho^*\).  Dividing the supporting inequality by
\(\vct{d}^T\vct{q}^*\) shows that
  \(\vct{d}/(\vct{d}^T\vct{q}^*)\) satisfies every constraint.  Because
  \(\vct{q}^*=\rho^*\vct{c}\), this point attains the bound, proving
  \cref{eq:value,eq:primalrecover}.

The origin cannot have positive weight in a convex representation of the
positive endpoint; otherwise rescaling the remaining weights would give a
farther point of \(\Q\) on the same ray.  Hence the stated row-weight
representation exists.  The normalized row representation and
\cref{eq:dualrecover} give
\[
  \vct{u}^*\geq\vct{0},\qquad
  A^T\vct{u}^*
  =\frac{1}{\rho^*}\sum_i\lambda_i\vct{a}_i
  =\vct{c}.
\]
Moreover, \(\mathbf 1^T\vct{u}^*=1/\rho^*=\vct{c}^T\vct{x}^*\).
Thus primal feasibility, dual feasibility, and equality of the normalized
objectives certify optimality by weak duality.

If \(\rho^*=0\), strict separation of \(\vct{c}\) from the conic
hull of \(\Q\) gives a vector \(\vct{d}\) with the stated signs.
Since \(\vct{a}_i^T\vct{d}\leq0\), we have
\(A\vct{d}\leq\vct{0}\).  Dividing by the positive number
\(\vct{c}^T\vct{d}\) gives the stated \(\vct{r}\).  Because
\(A\vct{r}\leq\vct{0}\) and \(\vct{c}^T\vct{r}=1\), every
\(t\vct{r}\), \(t\geq0\), is feasible for \((\mathrm P)\), and its objective
 tends to infinity.\nobreak\hfill
\end{proof}

\smallskip
\noindent\textbf{A direct endpoint formulation.}
Equivalently, the radial endpoint satisfies
\begin{equation}\label{eq:direct-radial-qp}
 \rho^*=\max_{\rho,\vct{\lambda}}
 \left\{\rho:
 \sum_{i=1}^m\lambda_i\vct{a}_i=\rho\vct{c},\
 \sum_{i=1}^m\lambda_i\leq1,\
 \vct{\lambda}\geq\vct{0},\ \rho\geq0\right\}.
\end{equation}
The omitted weight \(1-\sum_i\lambda_i\) is assigned to the origin, so the
constraints in \cref{eq:direct-radial-qp} are precisely the condition
\(\rho\vct{c}\in\Q\).  Solving this LP can recover the endpoint and its row
weights, but not Wolfe's projection path or terminal corral.  That state is what
P-LPN obtains from \cref{eq:projection}; when the endpoint is zero, P-LPN also
uses the separating normal from \cref{thm:polar-equivalence}.

\smallskip
\noindent\textbf{Why the row-polar form is useful.}
The row-polar form serves three purposes.  Its radial endpoint distinguishes a
finite optimum from unboundedness; the same compact hull is used at every outer
iteration, which makes Wolfe state reusable; and products with \(A\) and
\(A^T\) provide row scores and generator combinations without a second matrix
of generator columns.  \Cref{sec:corralreuse} makes the last point explicit.

\section{P-LPN: Algorithm and Outer-Iteration Bounds}\label{sec:plpn}

P-LPN approaches the objective-ray endpoint from outside \(\Q\).  It projects a
ray target onto the hull and uses the resulting supporting hyperplane to choose
the next target.
Put \(\mathcal I:=\{0,\ldots,m\}\), \(\vct{v}_0:=\vct{0}\), and
\(\vct{v}_i:=\vct{a}_i\) for \(i\geq1\).  Choose
\(\rho_0>\max_{i\in\mathcal I}\vct{c}^T\vct{v}_i\), so
\(\rho_0\vct{c}\notin\Q\).  At outer iteration \(k\), let
\begin{equation}\label{eq:projection}
  \vct{y}_k:=\rho_k\vct{c},\qquad
  \vct{z}_k\in\argmin_{\vct{q}\in\Q}
  \tfrac12\norm{\vct{q}-\vct{y}_k}^2,
  \qquad \vct{d}_k:=\vct{y}_k-\vct{z}_k,
  \qquad h_k:=\vct{d}_k^T\vct{z}_k.
\end{equation}
Projection optimality gives
\(h_k=\max_{\vct{q}\in\Q}\vct{d}_k^T\vct{q}\), so
\(\vct{d}_k^T\vct{q}=h_k\) supports \(\Q\) at \(\vct{z}_k\).  Its
intersection with the objective ray is the next target:
\begin{equation}\label{eq:newtonupdate}
  \rho_{k+1}:=\frac{h_k}{\vct{d}_k^T\vct{c}},
  \qquad \vct{y}_{k+1}:=\rho_{k+1}\vct{c}.
\end{equation}

The call
\((\vct{z},C,\vct{\lambda}_C):=\mathrm{Wolfe}(\vct{y};\mathcal I)\)
returns the unique projection, its terminal corral, and positive convex
weights on that corral.  Thus the indexed generators are affinely independent,
\(\vct{z}=\sum_{i\in C}\lambda_i\vct{v}_i\), and
\(\sum_{i\in C}\lambda_i=1\).  Wolfe alternates generator insertions under its
linear-optimization rule with minor cycles that delete zero-weight generators.

Recovery is handled outside the displayed outer loop.  The routine
\(\mathrm{VerifyFinite}\) receives
\((\rho,\vct{d},h,C,\vct{\lambda}_C)\), extends the corral weights by zeros,
and applies the recovery formulas in
\cref{thm:polar-equivalence}.  It then tests primal feasibility, dual
feasibility, and equality of the normalized objectives, and returns
\(\mathrm{Optimal}\) or \(\mathrm{OptTestFailed}\) together with
\((\vct{x}^*,\vct{u}^*)\).
The companion routine \(\mathrm{VerifyRecession}(\vct{d})\) forms the recession direction,
tests the normalized recession conditions, and returns \(\mathrm{Unbounded}\)
or \(\mathrm{RecTestFailed}\).  Because
\(\vct{y}_0\notin\Q\), the \(\vct{z}_k=\vct{y}_k\) branch cannot occur at
\(k=0\), so no branch reads a preceding supporting hyperplane before one exists.
An outer iteration is complete once Wolfe has returned the current projection
and P-LPN has either stopped or chosen the next target.  Inner major and minor
steps, affine solves, and generator tests are not part of this count; Section~6
accounts for them separately.
\begin{algorithm}[!t]
\caption{Polar LP-Newton (P-LPN)}
\label{alg:plpn}
\footnotesize
\begin{algorithmic}[1]
\Require normalized problem \((\mathrm P)\), index set \(\mathcal I\), and
generator access through \(A\)
\State \(\rho_0\in
\bigl(\max_{i\in\mathcal I}\vct{c}^T\vct{v}_i,
+\infty\bigr)\), \(k\gets0\)
\Loop
  \State \(\vct{y}_k\gets\rho_k\vct{c}\),
  \((\vct{z}_k,C_k,\vct{\lambda}^{\,k}_{C_k})
  \gets\mathrm{Wolfe}(\vct{y}_k;\mathcal I)\)
  \If{\(\vct{z}_k=\vct{y}_k\)}
    \If{\(\rho_k=0\)}
      \State \Return \(\mathrm{VerifyRecession}(\vct{d}_{k-1})\)
    \EndIf
    \State \Return
    \(\mathrm{VerifyFinite}(\rho_k,\vct{d}_{k-1},h_{k-1},
    C_k,\vct{\lambda}^{\,k}_{C_k})\)
  \EndIf
  \State \(\vct{d}_k\gets\vct{y}_k-\vct{z}_k\), \quad
  \(h_k\gets\vct{d}_k^T\vct{z}_k
  =\max_{i\in\mathcal I}\vct{d}_k^T\vct{v}_i\)
  \If{\(h_k=0\)}
    \State \Return \(\mathrm{VerifyRecession}(\vct{d}_k)\)
  \EndIf
  \State \(\rho_{k+1}\gets
  h_k/(\vct{d}_k^T\vct{c})\), \quad \(k\gets k+1\)
\EndLoop
\end{algorithmic}
\end{algorithm}

\begin{lemma}[Valid outer update]\label{lem:update}
If \(\rho_k>\rho^*\), then
\(\vct{d}_k^T\vct{c}>0\) and
\begin{equation}\label{eq:bracket}
  \rho^*\leq\rho_{k+1}<\rho_k.
\end{equation}
\end{lemma}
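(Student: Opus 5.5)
Since \(\rho_k>\rho^*\), the target \(\vct{y}_k=\rho_k\vct{c}\) lies outside \(\Q\) by the definition \cref{eq:rho} of \(\rho^*\).  Hence \(\vct{d}_k=\vct{y}_k-\vct{z}_k\neq\vct{0}\), and \(\vct{z}_k\neq\vct{y}_k\).  The whole argument rests on two inequalities from projection optimality:
\[
  \vct{d}_k^T\vct{q}\leq h_k\quad(\vct{q}\in\Q),
  \qquad
  \vct{d}_k^T\vct{y}_k=h_k+\norm{\vct{d}_k}^2>h_k .
\]
The first is the supporting property recorded after \cref{eq:projection}.  The second is the identity \(\vct{d}_k^T\vct{y}_k-\vct{d}_k^T\vct{z}_k=\norm{\vct{d}_k}^2\).

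\textbf{Step 1: the sign of \(\vct{d}_k^T\vct{c}\).}  Apply the supporting inequality at \(\vct{q}=\vct{0}\in\Q\) to get \(h_k\geq0\).  Then \(\rho_k\,\vct{d}_k^T\vct{c}=\vct{d}_k^T\vct{y}_k>h_k\geq0\).  Since \(\rho_k>\rho^*\geq0\), dividing by \(\rho_k\) gives \(\vct{d}_k^T\vct{c}>0\).  This also makes \(\rho_{k+1}\) in \cref{eq:newtonupdate} well defined.

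\textbf{Step 2: the upper bound \(\rho_{k+1}<\rho_k\).*}  Divide \(\rho_k\,\vct{d}_k^T\vct{c}>h_k\) by the positive number \(\vct{d}_k^T\vct{c}\).  This gives \(\rho_k>h_k/(\vct{d}_k^T\vct{c})=\rho_{k+1}\).

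\textbf{Step 3: the lower bound \(\rho^*\leq\rho_{k+1}\).*}  The point \(\rho^*\vct{c}\) belongs to \(\Q\), because the maximum in \cref{eq:rho} is attained.  The supporting inequality therefore gives \(\rho^*\,\vct{d}_k^T\vct{c}\leq h_k\).  Dividing by \(\vct{d}_k^T\vct{c}>0\) yields \(\rho^*\leq\rho_{k+1}\).  Geometrically, the hyperplane \(\vct{d}_k^T\vct{q}=h_k\) separates \(\vct{y}_k\) from \(\Q\), so it meets the ray strictly between the endpoint side and \(\vct{y}_k\).

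No step is a real obstacle.  The only delicate point is Step 1: the sign of \(\vct{d}_k^T\vct{c}\) must be settled before any division.  This is where \(\vct{0}\in\Q\) is needed, since it gives \(h_k\geq0\).  If \(\vct{0}\) were not a generator, \(h_k\) could be negative and the argument would fail.
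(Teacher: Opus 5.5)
Your proof is correct and is essentially the paper's: both combine the supporting inequality at \(\rho^*\vct{c}\in\Q\) with the identity \(\rho_k\vct{d}_k^T\vct{c}-h_k=\norm{\vct{d}_k}^2>0\) and then divide by \(\vct{d}_k^T\vct{c}\). The only difference is the sign step. The paper subtracts these two inequalities to get \((\rho_k-\rho^*)\vct{d}_k^T\vct{c}>0\) directly, so your detour through \(h_k\geq0\) is unnecessary, and your closing remark overstates the role of \(\vct{0}\in\Q\): in this lemma it is needed only to make \(\rho^*\) well defined.
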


\begin{proof}
Because \(\vct{q}^*=\rho^*\vct{c}\in\Q\), the projection
inequality gives
\(\rho^*\vct{d}_k^T\vct{c}\leq h_k\).
On the other hand,
\begin{equation*}
  \rho_k\vct{d}_k^T\vct{c}-h_k
  =\vct{d}_k^T(\vct{y}_k-\vct{z}_k)
  =\norm{\vct{y}_k-\vct{z}_k}^2>0.
\end{equation*}
Hence
\((\rho_k-\rho^*)\vct{d}_k^T\vct{c}>0\).
Since \(\rho_k>\rho^*\), the denominator in
\cref{eq:newtonupdate} is positive, and division of the two inequalities gives
 \cref{eq:bracket}.\nobreak\hfill
\end{proof}

\begin{theorem}[Finite termination and LP solution status of P-LPN]
\label{thm:finite}
Assume exact arithmetic and deterministic tie-breaking in Wolfe's nearest-point
subroutine.  Then every Wolfe call terminates.  Algorithm~\ref{alg:plpn}
returns either a primal--dual optimum for \((\mathrm P)\) as in
\cref{thm:polar-equivalence}(1), or a feasible recession direction certifying
that \((\mathrm P)\) is unbounded as in \cref{thm:polar-equivalence}(2).  It
performs at
most
\begin{equation}\label{eq:outer-bound}
  K(m+1,n):=1+\sum_{j=1}^{\min\{n+1,m+1\}}\binom{m+1}{j}
\end{equation}
completed outer iterations.
\end{theorem}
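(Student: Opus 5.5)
The proof has three parts, in this order: Wolfe terminates, the returned status is correct, and the number of outer iterations is bounded by counting distinct terminal corrals.

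\emph{Wolfe termination.} Each call works over the finite generator list \(\{\vct{v}_i\}_{i\in\mathcal I}\), so Wolfe's classical finiteness argument applies directly \citep{Wolfe1976}. The norm of the iterate strictly decreases at every major cycle. Each major cycle ends at a corral, and a corral determines its affine minimizer uniquely. Hence no corral can repeat, and there are finitely many affinely independent subsets.

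\emph{Correctness of the returned status.} By \cref{lem:update} and induction from \(\rho_0>\rho^*\), which holds because \(\rho_0\vct{c}\notin\Q\), the sequence \(\rho_k\) is strictly decreasing, every \(\rho_k\) with \(\vct{y}_k\notin\Q\) satisfies \(\rho_k>\rho^*\), and \(\vct{d}_k^T\vct{c}>0\). Now consider the possible stopping branches.
\begin{enumerate}
\item If \(\vct{z}_k=\vct{y}_k\) with \(k\ge1\), then \(\rho_k\vct{c}\in\Q\). Combined with \(\rho_k\ge\rho^*\), this gives \(\rho_k=\rho^*\). The previous hyperplane \(\vct{d}_{k-1}^T\vct{q}\le h_{k-1}\) supports \(\Q\) and passes through \(\vct{y}_k\), so it supports \(\Q\) at \(\vct{q}^*\).
\begin{enumerate}
\item If \(\rho^*>0\), then \(h_{k-1}=\rho_k\vct{d}_{k-1}^T\vct{c}>0\), so \cref{thm:polar-equivalence}(1) yields the primal--dual pair. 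The corral weights give a valid convex representation; by the argument in that proof the origin carries zero weight, so the \(\lambda_i\) are row weights summing to one.
\item If \(\rho_k=0\), then \(\vct{d}_{k-1}^T\vct{q}\le 0\) on \(\Q\) and \(\vct{d}_{k-1}^T\vct{c}>0\), so \cref{thm:polar-equivalence}(2) applies.
\end{enumerate}
\item If \(h_k=0\), then \(\vct{d}_k\) separates \(\Q\) from the ray, and the recession branch is valid for the same reason as in case 1(b).
\end{enumerate}
Under exact arithmetic the verification routines therefore return Optimal or Unbounded, never a failure status.

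\emph{Counting outer iterations.} Each completed non-terminal iteration produces a terminal corral \(C_k\subseteq\mathcal I\). This set is affinely independent in \(\R^n\), so \(1\le|C_k|\le\min\{n+1,m+1\}\). I will show that corrals at distinct iterations are distinct. The projection \(\vct{z}_k\) is the minimum-norm point of \(\conv C_k-\vct{y}_k\); more usefully, \(\vct{z}_k\) is the nearest point to \(\vct{y}_k\) on the affine hull of \(C_k\), and it lies in the relative interior of \(\conv C_k\).

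Suppose \(C_j=C_k\) with \(j<k\). Both projections then lie in the relative interior of the same face-like set \(\conv C\), and the residuals \(\vct{d}_j,\vct{d}_k\) are both orthogonal to \(\operatorname{aff}C\). Moreover \(h_j=\vct{d}_j^T\vct{z}_j=\max_{\Q}\vct{d}_j^T\vct{q}\) is attained on all of \(\conv C\).

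Let \(\pi\) denote orthogonal projection onto \(\operatorname{aff}C\); it is an affine map of \(\rho\), so \(\pi(\rho\vct{c})=\vct{p}+\rho\vct{w}\). The key step is to show that the supporting hyperplane from iteration \(j\) already meets the ray at the exact point where the projection onto \(\operatorname{aff}C\) is optimal. Granting that, \(\rho_{j+1}\) would equal a value at which \(\vct{y}\) lies in the supporting hyperplane, and then \(\rho_{j+1}=\rho^*\) by the projection inequality, so the algorithm terminates at iteration \(j+1\).

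\textbf{Main obstacle.} It is exactly this identity: showing that the Newton step from a corral \(C\) lands on the hyperplane \(\{\vct{q}:\vct{d}^T\vct{q}=h\}\) whose projection-induced corral cannot recur. I would follow \citet[Theorem~3.11]{FujishigeEtAl2009}. The function \(g(\rho)=\) distance from \(\rho\vct{c}\) to \(\operatorname{aff}C\), restricted to the interval of \(\rho\) on which \(C\) remains the corral, is the square root of a convex quadratic. The Newton update is the root of its tangent-line linearization \(h-\rho\,\vct{d}^T\vct{c}\) (cf.\ \citet[Remark~3.5]{FujishigeEtAl2009}). Because \(\vct{d}(\rho)\) is affine in \(\rho\) on that interval, the scalar function \(\rho\mapsto \vct{d}(\rho)^T(\rho\vct{c})-h(\rho)=\norm{\vct{d}(\rho)}^2\) gives a Newton step that either hits the zero of the affine residual, which forces termination, or leaves the interval. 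Hence each corral is used at most once. Counting the nonempty subsets of size at most \(\min\{n+1,m+1\}\), plus one for the terminating iteration, yields \(K(m+1,n)\).
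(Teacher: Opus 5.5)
Your Wolfe-termination argument and your stopping-branch analysis are fine. The counting step, however, is the heart of the theorem, and there you never prove that $i\mapsto C_i$ is injective.

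\textbf{The gap.} Your first formulation of the key step is false as stated. Every new target satisfies $\vct{d}_j^T\vct{y}_{j+1}=h_j$ by the very definition of the update, and the projection inequality yields only $\rho^*\le\rho_{j+1}$ (Lemma~\ref{lem:update}), not $\rho_{j+1}=\rho^*$. Taken literally, your inference would make P-LPN stop after a single Newton step.

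Your fallback claim is asserted rather than derived. It says that, because $\vct{d}(\rho)$ is affine in $\rho$ on the range where $C$ gives the projection, the Newton step ``either hits the zero of the affine residual or leaves the interval.'' Affineness alone does not give this. What gives it is a sign computation you do not make. Suppose $\rho_+:=\rho_{j+1}$ were still in that range. Then $\vct{y}_{j+1}$ and $\pi(\vct{y}_{j+1})$ would both lie on the hyperplane $\{\vct{q}:\vct{d}_j^T\vct{q}=h_j\}\supseteq\operatorname{aff}C$, so $\vct{d}(\rho_+)\perp\vct{d}_j$. It follows that $(\rho_j-\rho_+)\,\vct{c}^T\vct{d}(\rho_+)=-\norm{\vct{d}(\rho_+)}^2$. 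This contradicts $\vct{c}^T\vct{d}(\rho_+)>0$, which Lemma~\ref{lem:update} gives at iteration $j+1$, unless $\vct{d}(\rho_+)=\vct{0}$.

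Even with this computation, leaving the range only rules out $C_{j+1}=C_j$. To exclude a later recurrence $C_k=C_j$, you also need two facts. First, the set of radii at which $C$ yields the projection must be an interval; it is, by Theorem~\ref{thm:supp-exact-reuse}, but you do not argue this. Second, you must combine that with strict monotonicity of $\rho_k$.

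\textbf{The missing step.} You already listed every ingredient of a direct argument that needs no interval structure, and this is how the paper closes the step. Suppose $C_j=C_k=C$ for nonterminal iterations $j<k$.
\begin{enumerate}
\item Your observations give $\vct{d}_j^T\vct{z}_k=h_j$ and $\vct{d}_k^T\vct{z}_j=h_k$.
\item Since the radii decrease and $\vct{d}_j^T\vct{c}>0$, we have $\vct{d}_j^T\vct{y}_k\le\vct{d}_j^T\vct{y}_{j+1}=h_j$. Hence $\vct{d}_j^T\vct{d}_k=\vct{d}_j^T(\vct{y}_k-\vct{z}_k)\le0$.
\item Reading the same inner product the other way gives $\vct{d}_k^T(\vct{y}_j-\vct{z}_j)\le0$, that is, $\vct{d}_k^T\vct{y}_j\le h_k$.
\item But $\rho_j>\rho_k$ and $\vct{d}_k^T\vct{c}>0$ give $\vct{d}_k^T\vct{y}_j>\vct{d}_k^T\vct{y}_k=h_k+\norm{\vct{d}_k}^2>h_k$, a contradiction.
\end{enumerate}
This excludes repetitions at any distance at once. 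With it in place, your status analysis and your subset count complete the proof.
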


\begin{proof}
Wolfe's algorithm terminates finitely at every outer iteration
\citep{Wolfe1976}.  For each completed nonterminal iteration \(i\), let
\(C_i\subseteq\mathcal I\) index the terminal corral returned by Wolfe, and define
\[
 H_i:=\{\vct{q}:\vct{d}_i^T\vct{q}=h_i\},
 \qquad
 H_i^-:=\{\vct{q}:\vct{d}_i^T\vct{q}\leq h_i\}.
\]
Because \(\vct{z}_i\) is the affine minimizer over
\(\operatorname{aff}\{\vct{v}_\ell:\ell\in C_i\}\), \(\vct{d}_i\) is
orthogonal to every direction in that affine hull.  Hence each \(\ell\in C_i\)
satisfies
\(\vct{d}_i^T\vct{v}_\ell=\vct{d}_i^T\vct{z}_i=h_i\), so every generator
indexed by \(C_i\) lies in \(H_i\).  Projection optimality gives
\(\Q\subset H_i^-\), and
\[
 \vct{d}_i^T\vct{y}_i-h_i
 =\norm{\vct{y}_i-\vct{z}_i}^2>0.
\]
The update places \(\vct{y}_{i+1}\) on \(H_i\).  By
\cref{lem:update}, subsequent radii decrease strictly and
\(\vct{d}_i^T\vct{c}>0\); consequently,
\(\vct{y}_k\in H_i^-\) for every \(k>i\).

We now prove directly that a corral cannot repeat before termination.  Suppose
that \(C_j=C_k\) for two nonterminal iterations \(j<k\).  Since
\(\vct{z}_j\) and \(\vct{z}_k\) are convex combinations of the same
generators,
\[
 \vct{d}_j^T\vct{z}_k=h_j,
 \qquad
 \vct{d}_k^T\vct{z}_j=h_k.
\]
Because \(\vct{y}_k\in H_j^-\),
\[
 \vct{d}_j^T\vct{d}_k
 =\vct{d}_j^T(\vct{y}_k-\vct{z}_k)\leq0.
\]
Symmetry of the inner product gives
\(\vct{d}_k^T(\vct{y}_j-\vct{z}_j)\leq0\), and therefore
\(\vct{d}_k^T\vct{y}_j\leq h_k\).  On the other hand,
\(\rho_j>\rho_k\) and \(\vct{d}_k^T\vct{c}>0\), so
\[
 \vct{d}_k^T\vct{y}_j
 >\vct{d}_k^T\vct{y}_k
 =h_k+\norm{\vct{d}_k}^2
 >h_k,
\]
a contradiction.  Thus \(i\mapsto C_i\) is injective on the nonterminal outer
iterations.  This is the row-polar specialization of the support-order
argument in \citet[Theorem~3.11]{FujishigeEtAl2009}.

Every \(C_i\) indexes a nonempty affinely independent subset of the \(m+1\)
generators, so
\[
  1\leq |C_i|\leq \min\{n+1,m+1\}.
\]
There are therefore at most
\(\sum_{j=1}^{\min\{n+1,m+1\}}\binom{m+1}{j}=K(m+1,n)-1\)
nonterminal iterations.  An infinite run would require infinitely many
distinct such corrals, which is impossible.  Hence a terminal iteration occurs
after finitely many nonterminal iterations, giving the bound
\cref{eq:outer-bound}.  At that iteration, Algorithm~\ref{alg:plpn} returns
through either its finite-optimum or recession branch; \cref{thm:polar-equivalence}
 proves the stated LP conclusion.\nobreak\hfill
\end{proof}

In floating-point arithmetic, \textsc{Optimality Test Failed} means that the
candidate optimum was not confirmed, and \textsc{Recession Test Failed} means
the same for unboundedness.  Neither status asserts infeasibility: the origin is
strictly feasible for \((\mathrm P)\).

\subsection*{Explicit Rational-Data Outer-Iteration Bound}
\label{sec:rational-outer-bound}

The scalar distance viewpoint of
\citet[Remark~3.5]{FujishigeEtAl2009} suggests a polynomial Newton-step count
for rational-data LP-Newton.  The next theorem makes that observation explicit
for row-polar P-LPN.  It concerns completed outer iterations, not the Wolfe work
inside them.

Unit normalization can make a rational objective vector irrational.  For this
analysis, therefore, choose a nonzero rational vector
\(\boldsymbol{\gamma}\) parallel to \(\vct{c}\); one may take
\(\boldsymbol{\gamma}=\vct{c}^{\,0}\).  Parameterize the same ray by
\(\vct{y}(t):=t\boldsymbol{\gamma}\).  Assume
that the listed generators \(\vct{v}_0,\ldots,\vct{v}_m\),
\(\boldsymbol{\gamma}\), and the initial parameter \(t_0\) are rational, with
\[
 t_0>\max_{0\leq i\leq m}
 \frac{\boldsymbol{\gamma}^T\vct{v}_i}
      {\norm{\boldsymbol{\gamma}}^2}.
\]
This gives the same initial target as
\(\rho_0=t_0\norm{\boldsymbol{\gamma}}\).  For a nonempty closed convex set
\(\mathcal C\), let \(\operatorname{Proj}_{\mathcal C}(\vct{y})\) denote the
unique minimizer of \(\tfrac12\norm{\vct{q}-\vct{y}}^2\) over
\(\vct{q}\in\mathcal C\).  Define
\[
 t^*:=\max\{t\geq0:t\boldsymbol{\gamma}\in\Q\},\quad
 \vct{z}(t):=\operatorname{Proj}_{\Q}(t\boldsymbol{\gamma}),\quad
 \vct{d}(t):=t\boldsymbol{\gamma}-\vct{z}(t),\quad
 g(t):=\norm{\vct{d}(t)}.
\]

\begin{theorem}[Explicit rational-data outer-iteration bound]
\label{thm:rational-outer-bound}
Let \(\mathcal L\) be the total binary encoding length of
\(\vct{v}_0,\ldots,\vct{v}_m,\boldsymbol{\gamma}\), and \(t_0\).  If P-LPN is run in exact
arithmetic with exact Euclidean projections, it uses
\begin{equation}\label{eq:rational-outer-order}
 O(n^2\mathcal L)
\end{equation}
completed outer iterations.  A coarser bound stated only in input length is
\(O(\mathcal L^3)\).  Neither bound includes the inner Wolfe operations
performed within those iterations.
\end{theorem}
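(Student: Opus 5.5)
The plan is to recast P-LPN as Newton's method on the scalar convex function \(g(t)=\operatorname{dist}(t\boldsymbol{\gamma},\Q)\), in the spirit of \citet[Remark~3.5]{FujishigeEtAl2009}. Then I would use the standard "either the value or the slope halves" argument, with bit-length bounds on the extreme values that \(g\) and \(g'\) can take at non-terminal iterates.

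\emph{Step 1: P-LPN is Newton's method on \(g\).} Since \(\Q\) is convex, \(g\) is convex on \(\R\). It vanishes on \([0,t^*]\) and is positive for \(t>t^*\). For \(t>t^*\), the squared distance is differentiable with gradient \(2\vct{d}(t)\), so \(g'(t)=\vct{d}(t)^T\boldsymbol{\gamma}/g(t)\). This is positive by the argument of \cref{lem:update}. The Newton step on \(g\) from \(t_k\) is
\[
t_k-\frac{g(t_k)^2}{\vct{d}_k^T\boldsymbol{\gamma}}
=\frac{\vct{d}_k^T(\vct{y}_k-\vct{d}_k)}{\vct{d}_k^T\boldsymbol{\gamma}}
=\frac{h_k}{\vct{d}_k^T\boldsymbol{\gamma}},
\]
which is exactly the parameter form of \cref{eq:newtonupdate}. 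Write \(g_k:=g(t_k)\) and \(s_k:=g'(t_k)\). Convexity gives \(g_k\geq g_{k+1}+s_{k+1}(t_k-t_{k+1})=g_{k+1}+s_{k+1}g_k/s_k\). Dividing by \(g_k\) yields
\[
\frac{g_{k+1}}{g_k}+\frac{s_{k+1}}{s_k}\leq1 .
\]
Hence at every non-terminal step either \(g\) or \(g'\) at least halves. Both sequences are nonincreasing by \cref{lem:update} and convexity. So the number of non-terminal iterations is at most
\[
\log_2\frac{g_{0}}{g_{\min}}+\log_2\frac{s_{0}}{s_{\min}},
\]
where \(g_{\min}\) and \(s_{\min}\) are lower bounds valid at every non-terminal iterate.

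\emph{Step 2: upper bounds.} We have \(s_0\leq\norm{\boldsymbol{\gamma}}\leq2^{\mathcal L}\). Also \(g_0\leq t_0\norm{\boldsymbol{\gamma}}+\max_i\norm{\vct{v}_i}\leq2^{O(\mathcal L)}\). Both are routine.

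\emph{Step 3: lower bounds (the main obstacle).} Here I would use the polyhedral structure of the projection. The map \(t\mapsto\vct{z}(t)\) is piecewise affine with finitely many pieces. On each piece it coincides with the projection onto \(\operatorname{aff}\{\vct{v}_\ell:\ell\in C\}\) for a fixed affinely independent index set \(C\).

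On the piece adjacent to \(t^*\) from the right, \(\vct{z}\) is affine and \(\vct{z}(t^*)=t^*\boldsymbol{\gamma}\). Therefore \(\vct{d}(t)=(t-t^*)\vct{w}\) for a fixed vector \(\vct{w}\), and \(g\) is linear there. One Newton step from any point of this last piece then lands exactly on \(t^*\), which is a terminal iteration. So only iterates beyond the first breakpoint \(\tau_1>t^*\) need counting. Since \(g\) and \(g'\) are nondecreasing, at every such iterate
\[
g_k\geq g(\tau_1),\qquad s_k\geq g'(\tau_1^-)>0 .
\]

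It then remains to bound the encoding size of \(\tau_1\), \(g(\tau_1)^2\) and \(g'(\tau_1^-)^2\). Each is determined by one or two affinely independent generator subsets with at most \(n+1\) elements. Each arises from a Gram (normal-equation) system in the affine weights, solved by Cramer's rule, together with the breakpoint conditions: a weight hitting zero, or \(\vct{d}^T\vct{v}_j=h\) for an entering generator. Gram entries are inner products of rational data. By Hadamard-type estimates, the resulting determinants have numerators and denominators of size \(O(n\mathcal L)\), and the squared quantities involve products of such determinants, which accounts for the \(O(n^2\mathcal L)\) encoding size. Hence \(\log_2(1/g_{\min})\) and \(\log_2(1/s_{\min})\) are \(O(n^2\mathcal L)\), which gives \cref{eq:rational-outer-order} after adding one terminal iteration. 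The \(O(\mathcal L^3)\) bound follows from \(n\leq\mathcal L\).

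The delicate part is making Step 3 rigorous. This means justifying the claim that the last piece yields a linear \(g\), including the case \(t^*=0\) where the ray only touches \(\Q\) at the origin. It also means carrying out the determinant bookkeeping carefully at the breakpoint, where the outside and inside corrals differ.
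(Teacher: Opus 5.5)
Your proposal is correct. It shares the paper's skeleton: Newton's method on \(g\), the inequality \(g(t_+)/g(t)+g'(t_+)/g'(t)\le1\), an exactly linear piece of \(g\) to the right of \(t^*\) from which one Newton step reaches \(t^*\), and Cramer/Hadamard bounds. Where you differ is the final-segment lemma.

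The paper constructs that segment explicitly. It projects \(\boldsymbol{\gamma}\) onto the tangent cone \(\operatorname{cone}(\Q-\vct{q}^*)\) and verifies the variational inequality, which gives \(\operatorname{Proj}_{\Q}((t^*+\varepsilon)\boldsymbol{\gamma})=\vct{q}^*+\varepsilon\vct{w}\) for \(0\le\varepsilon\le\bar\varepsilon=\min\{1,(2\mu)^{-1}\}\). It then proves \(g(t)\ge\delta(t-t^*)\) and \(g'(t)\ge\delta\) globally. This needs no structural citation. However, its Gram system uses the \(\vct{q}^*\)-dependent directions \(\vct{v}_j-\vct{q}^*\), which already carry \(O(n\mathcal L)\) bits, and that nesting is what produces the factor \(n^2\).

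You instead rely on the piecewise-affine structure of \(t\mapsto\operatorname{Proj}_{\Q}(t\boldsymbol{\gamma})\). This is standard, from the face decomposition \(\R^n=\bigcup_F(\operatorname{relint}F+N_{\Q}(F))\), but you should cite it. You then use the exact breakpoint: your \(g'(\tau_1^-)\) is the paper's \(\delta\), and \(\tau_1-t^*\) takes the place of \(\bar\varepsilon\).

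The steps you call delicate are routine:
\begin{itemize}
\item Continuity at \(\vct{z}(t^*)=t^*\boldsymbol{\gamma}\) gives linearity on the last piece uniformly, including the case \(t^*=0\).
\item On the last piece, \(\vct{d}(t)^T(\vct{v}_j-\vct{z}(t))=(t-t^*)(\boldsymbol{\gamma}-\vct{u})^T(\vct{v}_j-\vct{q}^*)\), where \(\vct{u}\) is the projection of \(\boldsymbol{\gamma}\) onto the direction space of the face carrying that piece. So no generator's tightness changes sign there.
\item Consequently \(\tau_1\) is the zero of a single barycentric weight, affine in \(t\), for an affinely independent generator set spanning that face.
\end{itemize}

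Your route actually gives a sharper count than you claim. \(\delta^2\) is a ratio of Gram determinants of generator differences and \(\boldsymbol{\gamma}\), and \(\tau_1\) and \(t^*\) are Cramer quotients of order at most \(n+2\). All of these have \(O(n(\mathcal L+\log n))\) bits, so you get \(O(n\mathcal L)\) outer iterations. Your attribution of the factor \(n^2\) to products of determinants is therefore an overestimate, though a harmless one for the theorem as stated.
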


\begin{proof}
We first obtain a Newton-halving inequality, then describe the distance exactly
near the endpoint, and finally bound the rational constants that enter these
two arguments.

For \(t>t^*\), the point \(t\boldsymbol{\gamma}\) lies outside \(\Q\).  The squared-distance
function \(\tfrac12\operatorname{dist}(\vct{y},\Q)^2\) is differentiable with
gradient \(\vct{y}-\operatorname{Proj}_{\Q}(\vct{y})\).  Hence, for
\(F(t):=\tfrac12 g(t)^2\), the chain rule gives
\(F'(t)=\boldsymbol{\gamma}^T\vct{d}(t)\).  Since \(g(t)>0\), division by
\(g(t)\) gives
\begin{equation}\label{eq:distance-derivative-main}
 g'(t)=\frac{\boldsymbol{\gamma}^T\vct{d}(t)}{g(t)},
 \qquad 0<g'(t)\leq\norm{\boldsymbol{\gamma}}.
\end{equation}
The strict lower bound on \(g'(t)\) follows by applying the projection inequality to
\(\vct{q}^*:=t^*\boldsymbol{\gamma}\).  The supporting-hyperplane update is therefore Newton's
update for \(g(t)=0\):
\begin{equation}\label{eq:distance-newton-main}
 t_+:=t-\frac{g(t)}{g'(t)}
 =t-\frac{\norm{\vct{d}(t)}^2}{\boldsymbol{\gamma}^T\vct{d}(t)}
 =\frac{\vct{d}(t)^T\vct{z}(t)}{\vct{d}(t)^T\boldsymbol{\gamma}}.
\end{equation}
If \(t_+>t^*\), convexity of \(g\) between \(t_+\) and \(t\) gives
\begin{equation}\label{eq:distance-halving-main}
 \frac{g(t_+)}{g(t)}+\frac{g'(t_+)}{g'(t)}\leq1.
\end{equation}
Indeed, convexity gives
\(g(t)\geq g(t_+)+g'(t_+)(t-t_+)\), while
\cref{eq:distance-newton-main} gives
\(t-t_+=g(t)/g'(t)\); division by \(g(t)\) proves the display.  Therefore, at
least one of the two ratios in \cref{eq:distance-halving-main} is at most
one-half at every nonfinal update.

To see when the halving must stop, put
\(T:=\operatorname{cone}(\Q-\vct{q}^*)\), where
\(\operatorname{cone}\) denotes conic hull.  Let
\(\vct{w}:=\operatorname{Proj}_T(\boldsymbol{\gamma})\).  Define
\(\boldsymbol{\gamma}_{\perp}:=\boldsymbol{\gamma}-\vct{w}\) and
\(\delta:=\norm{\boldsymbol{\gamma}_{\perp}}\).  Here \(\delta>0\), because
\(\boldsymbol{\gamma}\in T\) would extend the objective ray beyond \(\vct{q}^*\).
If \(\vct{w}\neq\vct{0}\), conic Carath\'eodory gives an index set
\(J\subseteq\mathcal I\), with \(|J|\leq n\), linearly independent
directions \(\vct{r}_j:=\vct{v}_j-\vct{q}^*\), and positive coefficients
\(\alpha_j\) for \(j\in J\), such that
\(\vct{w}=\sum_{j\in J}\alpha_j\vct{r}_j\).  Let
\(R:=[\vct{r}_j]_{j\in J}\),
\(\vct{\alpha}:=(\alpha_j)_{j\in J}\),
\(\mu:=\sum_{j\in J}\alpha_j\), and
\(\bar\varepsilon:=\min\{1,(2\mu)^{-1}\}\).  If
\(\vct{w}=\vct{0}\), instead set \(\bar\varepsilon:=1\).
Projection onto the cone gives
\(\boldsymbol{\gamma}_{\perp}^T\vct{w}=0\) and
\(\boldsymbol{\gamma}_{\perp}^T\vct{r}\leq0\) for \(\vct{r}\in T\).  Hence, for
\(\vct{w}\neq\vct{0}\) and
\(0\leq\varepsilon\leq\bar\varepsilon\),
\(\vct{q}^*+\varepsilon\vct{w}
=(1-\varepsilon\mu)\vct{q}^*
+\varepsilon\sum_{j\in J}\alpha_j\vct{v}_j\in\Q\).
When \(\vct{w}=\vct{0}\), the same inclusion holds for
\(0\leq\varepsilon\leq\bar\varepsilon\) because
\(\vct{q}^*+\varepsilon\vct{w}=\vct{q}^*\).  Thus, in either case,
for any \(\vct{q}=\vct{q}^*+\vct{r}\in\Q\),
\[
 \bigl[\varepsilon\boldsymbol{\gamma}_{\perp}\bigr]^T
 \bigl[\vct{q}-(\vct{q}^*+\varepsilon\vct{w})\bigr]
 =\varepsilon\boldsymbol{\gamma}_{\perp}^T
   (\vct{r}-\varepsilon\vct{w})\leq0.
\]
This is the projection variational inequality, and therefore
\begin{equation}\label{eq:final-linear-main}
 \operatorname{Proj}_{\Q}((t^*+\varepsilon)\boldsymbol{\gamma})
 =\vct{q}^*+\varepsilon\vct{w},\qquad
 g(t^*+\varepsilon)=\varepsilon\delta.
\end{equation}
For arbitrary \(t>t^*\) and \(\vct{q}=\vct{q}^*+\vct{r}\in\Q\),
\[
 \boldsymbol{\gamma}_{\perp}^T
 \bigl(t\boldsymbol{\gamma}-\vct{q}\bigr)
 =(t-t^*)\delta^2-\boldsymbol{\gamma}_{\perp}^T\vct{r}
 \geq(t-t^*)\delta^2.
\]
Cauchy--Schwarz and minimization over \(\vct{q}\in\Q\) give
\(g(t)\geq\delta(t-t^*)\).  Since \(g\) is convex and
\cref{eq:final-linear-main} has slope \(\delta\) immediately to the right of
\(t^*\), also \(g'(t)\geq\delta\).  Consequently,
\(g(t)\leq\bar\varepsilon\delta\) implies
\(t-t^*\leq\bar\varepsilon\); the local linear formula then makes the next
Newton update exactly \(t^*\).

Equation~\eqref{eq:distance-halving-main} now bounds the number of nonfinal
updates by
\begin{equation}\label{eq:explicit-rational-outer-main}
 \left\lceil\log_2\max\left\{1,
 \frac{g(t_0)}{\bar\varepsilon\delta}\right\}\right\rceil
 +\left\lceil\log_2\max\left\{1,
 \frac{\norm{\boldsymbol{\gamma}}}{\delta}\right\}\right\rceil.
\end{equation}
At most two more projections reach and recognize the endpoint.

\smallskip
\noindent\emph{Encoding estimate.}
We now relate the constants in \cref{eq:explicit-rational-outer-main} to the
input encoding.  Write every rational coordinate of
\(\vct{v}_0,\ldots,\vct{v}_m\), \(\boldsymbol{\gamma}\), and \(t_0\) in lowest
terms, and let \(D\) be the product of their positive denominators.
Multiplication by \(D\) clears every input denominator.  Because
\(\log_2D\) is the sum of the logarithms of those denominators, \(D\) has
binary length at most \(\mathcal L\).  If \(M\vct{x}=\vct{b}\) is a nonsingular integer
system of order \(r\), and every entry of \(M\) and \(\vct{b}\) has absolute
value at most \(U\), then
\[
 1\leq |\det M|\leq r^{r/2}U^r.
\]
Hadamard's inequality gives the same upper bound for every Cramer numerator.
Thus Cramer's rule gives binary length
\(O(r(\log_2 U+\log_2 r))\) for every coordinate of its rational solution.
We use the elementary consequence that a nonzero rational number whose
numerator and denominator have binary length at most \(B\) has magnitude
between \(2^{-B}\) and \(2^B\).

The endpoint \(t^*\) is attained at an optimal basic solution of the rational LP
\[
 \begin{aligned}
 \operatorname*{maximize}_{\vct{\lambda},t}\quad &t\\[-2pt]
 \text{\rm subject to}\quad
 &\sum_{i=0}^m\lambda_i\vct{v}_i=t\boldsymbol{\gamma},\qquad
 \sum_{i=0}^m\lambda_i=1,\qquad
 \vct{\lambda}\geq\vct{0},\quad t\geq0.
 \end{aligned}
\]
Any optimal basic solution is determined by a nonsingular basis system of
order at most \(n+1\).  Multiplying this system by \(D\) gives an integer
coefficient matrix and right-hand side whose entries have magnitude
\(2^{O(\mathcal L)}\).  Cramer's rule therefore gives binary length
\(O(n(\mathcal L+\log(n+1)))\) for \(t^*\),
\(\vct{q}^*=t^*\boldsymbol{\gamma}\), and every coordinate of
\(\vct{v}_i-\vct{q}^*\).  Moreover, \(D\) together with the determinant of
this integer basis matrix gives a common denominator of the same
binary-length order for all coordinates of \(\boldsymbol{\gamma}\) and the
directions \(\vct{v}_i-\vct{q}^*\).

If \(\vct{w}\neq\vct{0}\), the positive coefficients in its conic
representation solve the nonsingular
Gram system
\((R^TR)\vct{\alpha}=R^T\boldsymbol{\gamma}\) of order at most \(n\).
After clearing a common denominator from \(R\) and
\(\boldsymbol{\gamma}\), each integer Gram entry has binary length
\(O(n(\mathcal L+\log(n+1)))\).  Applying the determinant estimate to this
system gives binary length \(O(n^2\mathcal L)\) for \(\vct{\alpha}\),
\(\vct{w}=R\vct{\alpha}\), and
\(\boldsymbol{\gamma}_{\perp}=\boldsymbol{\gamma}-\vct{w}\).
The coordinates of \(\vct{\alpha}\) share the Gram determinant as a
denominator, so \(\mu=\mathbf 1^T\vct{\alpha}\) has the same asymptotic
encoding length.  Since \(\boldsymbol{\gamma}_{\perp}\neq\vct{0}\), writing
its coordinates over a common denominator shows that at least one integer
numerator has magnitude at least one.  Therefore
\(\delta=\norm{\boldsymbol{\gamma}_{\perp}}\) and
\(\bar\varepsilon=\min\{1,(2\mu)^{-1}\}\) are bounded below by
\(2^{-O(n^2\mathcal L)}\).  If \(\vct{w}=\vct{0}\), then
\(\bar\varepsilon=1\) and \(\delta=\norm{\boldsymbol{\gamma}}\), so the same
lower bound follows directly from the nonzero rational input vector.
Finally, because \(\vct{0}\in\Q\),
\(g(t_0)\leq |t_0|\norm{\boldsymbol{\gamma}}\), and the input encoding bounds
both factors.  Consequently a universal constant \(C_{\rm enc}>0\) satisfies
\[
 \delta,\bar\varepsilon\geq2^{-C_{\rm enc}n^2\mathcal L},
 \qquad g(t_0),\norm{\boldsymbol{\gamma}}\leq2^{C_{\rm enc}n^2\mathcal L}.
\]
Each logarithm in \cref{eq:explicit-rational-outer-main} is therefore
\(O(n^2\mathcal L)\).  This proves \cref{eq:rational-outer-order}.  Since
\(n\leq\mathcal L\) for a nontrivial encoded instance, the coarser
 \(O(\mathcal L^3)\) bound follows.\nobreak\hfill
\end{proof}

For rational \(A^0,\vct{b}^{\,0},\bar{\vct{x}}^{\,0},\vct{c}^{\,0}\), the
normalized generators are rational and have encoding length polynomial in the
original input length, so the theorem also gives a polynomial outer-iteration
count in that input.  Suppose further that all generator coordinates and all
coordinates of the rational ray direction belong to a fixed finite set of
rational numbers.  If \(t_0\) is chosen as one plus the maximum in the
initial-target bound above, the same determinant argument gives
\(O(n^2\log(n+1))\) outer iterations, independent of numerical bit lengths.
This is a strongly polynomial bound on the number of outer iterations for that special class,
not a strongly polynomial algorithm.

The arbitrary-real combinatorial bound in \cref{eq:outer-bound} can be
exponential.  Moreover, no polynomial bound is known for Wolfe's inner
algorithm under the linear-optimization rule; the known exponential example
uses the different minimum-norm insertion rule
\citep{DeLoeraEtAl2020,FujishigeEtAl2025}.  Thus
\cref{thm:rational-outer-bound} does not prove weak or strong polynomiality of
the complete P-LPN method.

\section{Projection and Verification Choices}
\label{sec:validity-conditions}

Two suggestions in \citet[Section 5]{Fujishige2019} could in principle improve
the row-polar method: weighting distance along the objective ray and testing a
selected group of generators before the rest.  Ray weighting can produce a
larger update, but it changes the geometry, may worsen conditioning, and did
not improve runtime consistently.  Testing selected generators first changes
only their order; all remaining generators must still be checked.

We call that final test over the entire generator list \emph{global Wolfe
verification}.  Candidate-first ordering by itself was no faster.  We therefore
use Euclidean projection, repair the preceding corral, test that small set
first, and then complete global verification.  Appendix~\ref{app:projection-details}
reports the metric variant and the no-repair ordering control.

\section{CR-P-LPN: Algorithm and Analysis}
\label{sec:corralreuse}

Let \(V:=[\vct{0},\vct{a}_1,\ldots,\vct{a}_m]\) denote the generator matrix,
and let \(V_C\) collect the columns indexed by \(C\subseteq\mathcal I\).
CR-P-LPN solves the same projection problem as P-LPN; only Wolfe's starting
state changes.

\subsection{Corral Repair}

At a new target, CR-P-LPN repairs the preceding terminal corral before calling
Wolfe.  The result is an initial active set, not yet the new projection; Wolfe
must still verify optimality over the full hull.

Fix an outer iteration and abbreviate the preceding corral, its weights, and
the new target by
\[
 C:=C_k,\qquad
 \vct{\lambda}:=\vct{\lambda}^{\,k}_{C_k},\qquad
 \vct{y}^+:=\rho_{k+1}\vct{c}.
\]
Write \(\mathbf 1\) for an all-ones vector of the required dimension.  Repair
first computes the affine minimizer
\begin{equation}\label{eq:corral-repair}
 \vct{\lambda}^{\mathrm{aff}}
 :=\argmin_{\mathbf 1^T\vct{\mu}=1}
 \tfrac12\norm{V_C\vct{\mu}-\vct{y}^+}^2.
\end{equation}
If any component of \(\vct{\lambda}^{\mathrm{aff}}\) is nonpositive, set
\[
 \theta:=\min_{i\in C:\lambda_i^{\mathrm{aff}}\leq0}
 \frac{\lambda_i}{\lambda_i-\lambda_i^{\mathrm{aff}}},
 \qquad
 \vct{\lambda}\leftarrow
 (1-\theta)\vct{\lambda}+\theta\vct{\lambda}^{\mathrm{aff}}.
\]
This step preserves nonnegative weights and brings at least one of them to zero.
After deleting the zero-weight generators, we solve \cref{eq:corral-repair}
again.  At most \(|C|-1\) passes leave a nonempty corral with positive weights.
We denote the resulting pair by
\(\mathrm{Repair}(\vct{y}^+;C_k,\vct{\lambda}^{\,k}_{C_k})\).

\subsection{Implicit CR-P-LPN}

The initial projection starts cold.  An integer \(B\geq1\) permits warm starts
only at indices \(k=1,\ldots,B-1\); it does not limit the total outer
iterations.  Larger \(B\) creates more opportunities for reuse but can also add
repair and verification work.  The correctness result below holds for every
finite \(B\).  The flag \(\mathrm{coldOnly}\) records when all remaining
projections must start cold.

In \cref{alg:corralreuse}, \(\mathrm{Wolfe}\) is the cold call from
\cref{alg:plpn}, and \(\mathrm{Repair}\) is the procedure above.
\(\mathrm{WarmWolfe}\) begins from the repaired pair, tests the candidate set
\(\mathcal J_k\subseteq\mathcal I\), and then checks every remaining generator.
\(\mathrm{StatusUpdate}\) carries out the stopping and radius-update branches
of P-LPN.  It returns a status \(\sigma_k\), the associated result
\(\mathcal R_k\), and the next radius; details follow the algorithm.

\begin{algorithm}[H]
\caption{Implicit CR-P-LPN}
\label{alg:corralreuse}
\footnotesize
\begin{algorithmic}[1]
\Require normalized problem \((\mathrm P)\), index set \(\mathcal I\), and
generator access through \(A\)
\Require \(\rho_0>\max_{i\in\mathcal I}
\vct{c}^T\vct{v}_i\), reuse limit \(B\in\mathbb N\)
\State \(\mathrm{coldOnly}\gets0\), \quad
\((\vct{d}_{-1},h_{-1})\gets(\vct{0},0)\)
\For{\(k=0,1,\ldots\)}
  \State \(\vct{y}_k\gets\rho_k\vct{c}\)
  \If{\(k=0\) or \(k\geq B\) or \(\mathrm{coldOnly}=1\)}
    \State \((\vct{z}_k,C_k,\vct{\lambda}^{\,k}_{C_k})
    \gets\mathrm{Wolfe}(\vct{y}_k;\mathcal I)\)
  \Else
    \State \((\widetilde C_k,\widetilde{\vct{\lambda}}_k)
    \gets\mathrm{Repair}(\vct{y}_k;C_{k-1},
    \vct{\lambda}^{\,k-1}_{C_{k-1}})\)
    \State \((\vct{z}_k,C_k,\vct{\lambda}^{\,k}_{C_k},\mathrm{ok})
    \gets\mathrm{WarmWolfe}(\vct{y}_k;\widetilde C_k,
    \widetilde{\vct{\lambda}}_k,\mathcal J_k)\)
    \If{\(\mathrm{ok}=0\)}
      \State \(\mathrm{coldOnly}\gets1\), \quad
      \((\vct{z}_k,C_k,\vct{\lambda}^{\,k}_{C_k})
      \gets\mathrm{Wolfe}(\vct{y}_k;\mathcal I)\)
    \EndIf
  \EndIf
  \State \(\vct{d}_k\gets\vct{y}_k-\vct{z}_k\), \quad
  \(h_k\gets\vct{d}_k^T\vct{z}_k\)
  \State \((\sigma_k,\mathcal R_k,\rho_{k+1})
  \gets\mathrm{StatusUpdate}(\rho_k,\vct{z}_k,C_k,
  \vct{\lambda}^{\,k}_{C_k};\vct{d}_{k-1},h_{k-1},\vct{d}_k,h_k)\)
  \If{\(\sigma_k\neq\mathrm{Continue}\)}
    \State \Return \((\sigma_k,\mathcal R_k)\)
  \EndIf
\EndFor
\end{algorithmic}
\end{algorithm}

\Cref{fig:corral-repair-mechanism} shows why repair and verification are
separate operations.

\begin{figure}[H]
\centering
\resizebox{0.98\textwidth}{!}{\centering
\begin{tikzpicture}[
  x=1cm,y=1cm,>=Stealth,
  every node/.style={font=\scriptsize},
  point/.style={circle,fill=black,inner sep=1.35pt},
  flow/.style={draw=black!55,fill=black!3,rounded corners=1pt,
    align=center,text width=1.75cm,minimum height=.62cm,inner sep=2.5pt}
]
  \node[font=\scriptsize\bfseries] at (2.0,2.75) {(a) Affine repair};
  \coordinate (A) at (0,0);
  \coordinate (B) at (4.0,0);
  \coordinate (C) at (1.15,2.0);
  \coordinate (L) at (1.30,.66);
  \coordinate (Lp) at ($(B)!0.25!(C)$);
  \coordinate (G) at ($(L)!1.55!(Lp)$);
  \fill[black!6] (A)--(B)--(C)--cycle;
  \draw[black!55,thick] (A)--(B)--(C)--cycle;
  \node[black!65,align=center] at (1.38,1.45)
    {active convex\\hull};
  \draw[blue!65!black,very thick,-{Stealth[length=2.2mm]}] (L)--(Lp)
    node[pos=.40,above=4pt,text=blue!65!black] {minor step};
  \draw[red!70!black,dashed,thick,-{Stealth[length=2.0mm]}] (Lp)--(G);
  \node[point,blue!65!black,label={[text=blue!65!black]below left:
    \(V_C\vct{\lambda}\)}] at (L) {};
  \node[point,green!45!black,label={[text=green!45!black]below:
    \(V_C\vct{\lambda}^{+}\)}] at (Lp) {};
  \node[point,red!70!black,label={[text=red!70!black]right:
    \(V_C\vct{\lambda}^{\rm aff}\)}] at (G) {};
  \node[black!70,align=center] (zero) at (3.85,1.35)
    {first zero\\weight};
  \draw[black!70,-{Stealth[length=1.8mm]}] (zero.south west)--(Lp);
  \begin{scope}[xshift=5.45cm]
    \node[font=\scriptsize\bfseries] at (4.2,2.75)
      {(b) Verified warm projection};
    \node[flow] (old) at (0.75,2.05) {preceding\\terminal corral};
    \node[flow] (repair) at (3.15,2.05) {repair at\\new target};
    \node[flow] (warm) at (5.65,2.05) {warm Wolfe};
    \node[flow] (exact) at (8.15,2.05) {projection\\\(\vct{z}_k\)};
    \node[flow] (cold) at (5.65,.35) {cold Wolfe};
    \draw[->,thick,black!65] (old)--(repair);
    \draw[->,thick,black!65] (repair)--(warm);
    \draw[->,thick,black!65] (warm)--(exact);
    \draw[->,thick,black!65] (warm)--
      node[right,align=left]{unverified\\stop}(cold);
    \draw[->,thick,black!65] (cold.east)-|
      node[pos=.28,below]{recompute}(exact.south);
  \end{scope}
\end{tikzpicture}}
\caption{Corral repair changes Wolfe's initialization, not the projection problem.
\label{fig:corral-repair-mechanism}}
\begin{minipage}{0.98\textwidth}
\scriptsize Panel (a) shows the case \(\theta<1\): repair moves toward the
  affine minimizer only until one weight first reaches zero, then deletes that
  generator and resolves.  In panel (b), warm Wolfe continues after any violating
  generator.  It returns only after the target itself is the projection or after
  global Wolfe verification;
  if it stops before verification, cold Wolfe recomputes the projection.
\end{minipage}
\end{figure}

\smallskip
\noindent\textbf{Interface details.}
Define the global Wolfe violation
\(\Delta(\vct{y},\vct{z})
:=\max_{j\in\mathcal I}(\vct{y}-\vct{z})^T(\vct{v}_j-\vct{z})\).
For \(\vct{z}\in\Q\), \(\Delta(\vct{y},\vct{z})\leq0\) if and only if
\(\vct{z}\) is the projection of \(\vct{y}\) onto \(\Q\).
The set \(\mathcal J_k\) contains the repaired corral.  From this corral and its
weights, \(\mathrm{WarmWolfe}\) returns
\((\vct{z}_k,C_k,\vct{\lambda}^{\,k}_{C_k},\mathrm{ok})\).
Here \(\mathrm{ok}=0\) only when a numerical or resource limit stops the call
before verification.  Finding an ordinary violating generator simply makes
Wolfe continue.
If \(\vct{y}_k=\vct{z}_k\), the target itself is the projection and no global
test is needed.

The status output \(\sigma_k\) belongs to
\(\{\mathrm{Continue},\mathrm{Optimal},\mathrm{OptTestFailed},
\mathrm{Unbounded},\mathrm{RecTestFailed}\}\).
The result is \(\mathcal R_k:=\emptyset\) for \(\mathrm{Continue}\),
\(\mathcal R_k:=(\vct{x}^*,\vct{u}^*)\) for \(\mathrm{Optimal}\) or
\(\mathrm{OptTestFailed}\), and \(\mathcal R_k:=\vct{r}\) for
\(\mathrm{Unbounded}\) or \(\mathrm{RecTestFailed}\).
A terminal call returns \(\rho_k\) in the unused radius slot.  The dummy pair
\((\vct{d}_{-1},h_{-1}):=(\vct{0},0)\) is never read because
\(\vct{y}_0\notin\Q\).  Returned weights are indexed by their corral;
\(\mathrm{StatusUpdate}\) extends them by zeros when recovery needs all row
weights.

\smallskip
\noindent\textbf{Implicit generator operations.}
The implementation works directly with \(A\); the matrix \(V\) is notation,
not stored data.
For
\(\vct{\lambda}=(\lambda_0,\lambda_1,\ldots,\lambda_m)^T\), write
\(\vct{\lambda}_{1:m}:=(\lambda_1,\ldots,\lambda_m)^T\).  Then, for
\(\vct{d}\in\R^n\),
\begin{equation}\label{eq:implicit-polar}
 V\vct{\lambda}
 =A^T\vct{\lambda}_{1:m},
 \qquad
 V^T\vct{d}
 =\begin{bmatrix}0\\
 A\vct{d}
 \end{bmatrix}.
\end{equation}
Thus \(A\) and \(A^T\) supply all generator scores and combinations without a
separate generator matrix.

Let \(\operatorname{nnz}(A)\) denote the number of stored nonzeros in \(A\).
By \cref{eq:implicit-polar}, a full row-polar scan and a row-polar convex
combination each cost \(O(\operatorname{nnz}(A)+m)\).  The score, weight, and
index vectors have length \(m+1\), while an affinely independent corral has at
most \(n+1\) generators.  Its columns and Karush--Kuhn--Tucker (KKT) matrix
therefore need \(O(n^2)\) storage, and all remaining work vectors have length
\(n\).  Thus the extra storage beyond \(A\) is \(O(m+n^2)\), with no
separate \(n\)-by-\((m+1)\) generator matrix.

\FloatBarrier

\begin{theorem}[Solution status and common outer-target sequence of CR-P-LPN]
\label{thm:corralreuse}
Let \(B<\infty\), assume exact arithmetic and deterministic tie-breaking in
all Wolfe calls, and require global Wolfe verification before accepting a
projection of any
\(\vct{y}_k\notin\Q\).  Then CR-P-LPN terminates with the same solution status
for \((\mathrm P)\)---optimal or unbounded---and generates the same outer-target
sequence as P-LPN.  If \(N_{\rm P}\) and
\(N_{\rm CR}\) denote their completed outer-iteration counts, then
\begin{equation}\label{eq:corralreuse-bound}
 N_{\rm CR}=N_{\rm P}\leq K(m+1,n).
\end{equation}
Under the rational assumptions of \cref{thm:rational-outer-bound}, this common
count is \(O(n^2\mathcal L)\).
\end{theorem}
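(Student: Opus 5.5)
The plan is an induction on the outer index \(k\) showing that CR-P-LPN and P-LPN produce the same radius \(\rho_k\), the same projection \(\vct{z}_k\), and hence the same \((\vct{d}_k,h_k)\) and the same branch decision. The key fact is uniqueness of the Euclidean projection onto the compact convex set \(\Q\). Whatever starting state a Wolfe call uses, if the returned point \(\vct{z}\in\Q\) satisfies \(\Delta(\vct{y},\vct{z})\leq0\), then \(\vct{z}=\operatorname{Proj}_{\Q}(\vct{y})\) by the variational characterization stated with the interface details. The base case \(k=0\) is immediate: both methods take the same \(\rho_0\) and call cold \(\mathrm{Wolfe}(\vct{y}_0;\mathcal I)\) with the same deterministic tie-breaking, so even the corral \(C_0\) agrees.

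For the inductive step, assume \(\rho_k\) agrees. There are three cases according to how CR-P-LPN computes \(\vct{z}_k\).
\begin{enumerate}
  \item A cold call (\(k\geq B\) or \(\mathrm{coldOnly}=1\)) or the fallback after \(\mathrm{ok}=0\) is literally the P-LPN projection.
  \item A warm call that returns with \(\mathrm{ok}=1\) and \(\vct{y}_k\notin\Q\) has passed global Wolfe verification, so it returns the unique projection.
  \item A warm call that returns \(\vct{z}_k=\vct{y}_k\) gives a point of \(\Q\) equal to the target, which is trivially its own projection. In P-LPN the cold call returns the same point.
\end{enumerate}
I would also check that Repair yields a valid Wolfe start. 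It produces a nonempty affinely independent subset of the previous corral with positive convex weights, so its point lies in \(\Q\). Wolfe's finite-termination argument (strict decrease of the distance, no repeated corrals) therefore applies from this start, and each warm call terminates.

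In every case \(\vct{z}_k\) agrees, so \(\vct{d}_k\), \(h_k\), and the tests \(\vct{z}_k=\vct{y}_k\), \(\rho_k=0\), \(h_k=0\) evaluated inside StatusUpdate coincide with those of \cref{alg:plpn}. The next radius \(\rho_{k+1}=h_k/(\vct{d}_k^T\vct{c})\) is therefore identical, which closes the induction. At termination, VerifyRecession depends only on \(\vct{d}_{k-1}\) or \(\vct{d}_k\), which agree. VerifyFinite may receive a corral and weights that differ from P-LPN's. However, \cref{thm:polar-equivalence}(1) states that for any supporting normal and any convex row-weight representation of \(\vct{q}^*\), the recovered pair is a primal--dual optimum. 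The solution status, optimal or unbounded, is therefore the same, even if the dual vector might differ.

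The main obstacle is this last point together with termination. The corrals \(C_k\) need not coincide, because warm Wolfe may end at a different affinely independent representation of the same \(\vct{z}_k\). So I must make sure that nothing downstream depends on the corral except through \(\vct{z}_k\), and that the weights returned at a terminal iteration are a valid convex representation using rows only. For the latter I would reuse the argument from the proof of \cref{thm:polar-equivalence}: the origin cannot carry positive weight at a positive endpoint. Once the target sequences agree, the two methods stop at the same index, so \(N_{\rm CR}=N_{\rm P}\). The bound \(K(m+1,n)\) then follows from \cref{thm:finite}, and the \(O(n^2\mathcal L)\) bound from \cref{thm:rational-outer-bound}. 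Both results concern P-LPN's target sequence only and do not involve inner Wolfe work.
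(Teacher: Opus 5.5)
Your proposal is correct and follows the paper's proof. Both argue by induction on \(k\): in the cold, verified-warm, and zero-residual cases the uniqueness of \(\operatorname{Proj}_{\Q}(\vct{y}_k)\) forces the same \((\vct{d}_k,h_k)\), the same stopping branch, and the same \(\rho_{k+1}\), and the bounds are then inherited from \cref{thm:finite} and \cref{thm:rational-outer-bound}. Your two extra checks make explicit points the paper's proof leaves implicit: that a repaired corral is a valid Wolfe start, so warm calls terminate, and that \(\mathrm{VerifyFinite}\) succeeds with any row-only convex representation of \(\vct{q}^*\) even when the terminal corrals of the two methods differ.
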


\begin{proof}
Use superscripts \({\rm P}\) and \({\rm CR}\) for the quantities generated by
P-LPN and CR-P-LPN.  Both methods start from the same radius, so
\(\vct{y}_0^{\rm P}=\vct{y}_0^{\rm CR}=\rho_0\vct{c}\); they also use the same
dummy preceding support data at \(k=0\).

Suppose at iteration \(k\) that the targets and preceding support data agree.
P-LPN computes \(\operatorname{Proj}_{\Q}(\vct{y}_k)\).  Repair changes only
Wolfe's initial active set.  If warm Wolfe returns the target itself or passes
global Wolfe verification, it has computed this same unique projection;
otherwise cold Wolfe recomputes it before any radius update.
Consequently,
\[
 \vct{z}_k^{\rm CR}
 =\operatorname{Proj}_{\Q}(\vct{y}_k)
 =\vct{z}_k^{\rm P},
 \qquad
 (\vct{d}_k^{\rm CR},h_k^{\rm CR})
 =(\vct{d}_k^{\rm P},h_k^{\rm P}).
\]
The two methods therefore take the same stopping branch.  If that branch is
nonterminal, \cref{eq:newtonupdate} gives
\(\rho_{k+1}^{\rm CR}=\rho_{k+1}^{\rm P}\), so their next targets and preceding
support data again agree.  Induction proves the common outer-target sequence,
the common solution status, and \(N_{\rm CR}=N_{\rm P}\).

The finite combinatorial bound and validity of the terminal status follow from
\cref{thm:finite}; under the rational assumptions, the sharper bound follows
from \cref{thm:rational-outer-bound}.  Work performed by an unsuccessful warm
 attempt is included in the operation counts below.\nobreak\hfill
\end{proof}

\begin{proposition}[Arithmetic-operation and storage bounds]
\label{prop:output-work}
Let \(G\) be the number of full-row passes used either to find a violating
generator or to verify Wolfe's condition, \(H\) the number of affine corral
solves, \(N_{\rm out}\) the number of completed outer iterations, and
\(r_h\leq n+1\) the number
of generators in the \(h\)-th affine solve.  Excluding the initial input read, the implicit
row-polar implementation uses
\begin{equation}\label{eq:output-work}
 O\left((G+1)(\operatorname{nnz}(A)+m)
 +\sum_{h=1}^{H}(nr_h^2+r_h^3)+N_{\rm out}n+n^3\right)
\end{equation}
arithmetic operations and
\(O(\operatorname{nnz}(A)+m+n^2)\) storage.  Full-row passes and affine corral
solves performed before any unverified warm exit are included in \(G\) and
\(H\), respectively.
\end{proposition}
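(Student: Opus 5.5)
The plan is a charging argument: classify every arithmetic operation in a run of \cref{alg:corralreuse} (and, as a special case, \cref{alg:plpn}) into one of four kinds and charge it to a term of \cref{eq:output-work}. The kinds are full-row passes, affine corral solves, per-outer-iteration vector work, and one-time setup or recovery work. The run is assumed to include any unverified warm exits and their cold restarts.

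\emph{Full-row passes.} By \cref{eq:implicit-polar}, computing all scores \(V^T\vct{d}\) costs one product \(A\vct{d}\), that is, \(O(\operatorname{nnz}(A)+m)\). Forming \(V\vct{\lambda}=A^T\vct{\lambda}_{1:m}\) for a sparse or full weight vector costs the same. Each Wolfe insertion search, each global verification scan (including the candidate-first scan over \(\mathcal J_k\), which is a subset of the full pass), and each scan terminated before an unverified exit is one of the \(G\) passes. The extra ``\(+1\)'' covers a constant number of passes outside Wolfe: the initial choice of \(\rho_0\) (one product \(A\vct{c}\)) and the final recovery checks in \(\mathrm{VerifyFinite}\) or \(\mathrm{VerifyRecession}\), which need \(A\vct{x}^*\) or \(A\vct{r}\) and \(A^T\vct{u}^*\).

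\emph{Affine solves and per-iteration work.} Each affine minimizer, whether from a Wolfe minor cycle or from one pass of \(\mathrm{Repair}\) in \cref{eq:corral-repair}, is obtained from the \(r_h\)-column matrix \(V_C\). The Gram matrix \(V_C^TV_C\) costs \(O(nr_h^2)\), the bordered KKT solve costs \(O(r_h^3)\), and forming \(V_C\vct{\mu}\), the ratio test, and the convex step cost \(O(nr_h)\). Insertion and deletion of a single generator cost at most the same as one solve, so they are charged to the adjacent solve. Outside these, each outer iteration does only \(O(n)\) work: \(\vct{y}_k\), \(\vct{d}_k\), \(h_k\), \(\vct{d}_k^T\vct{c}\), and \(\rho_{k+1}\). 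Each \(\mathrm{StatusUpdate}\) branch test is also \(O(n)\), which gives \(N_{\rm out}n\).

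\emph{Residual terms and storage.} The \(n^3\) term absorbs one-time dense work of size at most \(n+1\), such as a final affine re-solve on the terminal corral in recovery. It also absorbs \(\vct{c}\) normalization. Storage follows the paragraph preceding the theorem. We keep \(A\), vectors of length \(m+1\) (scores, extended weights, indices, \(\vct{u}^*\)), at most \(n+1\) active columns of length \(n\), and the \((n+2)\times(n+2)\) KKT matrix. Because \(V\) is never formed, this totals \(O(\operatorname{nnz}(A)+m+n^2)\).

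The main obstacle is bookkeeping rather than mathematics. We must ensure no operation escapes the four categories, in particular the work of warm attempts discarded after \(\mathrm{ok}=0\). Handling this requires defining \(G\) and \(H\) as counts over all calls, which is exactly how the statement includes them, and checking that discarded state is simply overwritten, so it adds no storage.
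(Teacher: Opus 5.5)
Your charging argument is correct and is essentially the paper's proof, which likewise bounds each full-row pass via \cref{eq:implicit-polar}, each affine corral solve by \(O(nr_h^2+r_h^3)\), ray updates by \(O(n)\) per outer iteration, and storage by the implicit-product accounting. One point to sharpen: in the paper the \(n^3\) term pays for the optional full-rank support polishing at a finite endpoint, a pseudoinverse on the positive dual support, which has at most \(n+1\) rows. Your catch-all ``one-time dense work of size at most \(n+1\)'' absorbs this cost, but you should name it explicitly.
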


\begin{proof}
\Cref{eq:implicit-polar} gives the full-row verification cost.  For a corral with \(r_h\)
generators, forming its Gram matrix costs \(O(nr_h^2)\), and solving or
estimating the condition of its KKT system costs \(O(r_h^3)\).  Ray updates
cost \(O(n)\).  Products with \(A\) and \(A^T\) used in verification are
included in the first term of \cref{eq:output-work}; optional full-rank support
polishing costs at most \(O(n^3)\).  The storage statement follows from the
 implicit-product accounting after \cref{eq:implicit-polar}.\nobreak\hfill
\end{proof}

If \(G,H,N_{\rm out}\) stay bounded over a family, \(r_h\leq n+1\) reduces
\cref{eq:output-work} to \(O(\operatorname{nnz}(A)+m+n^3)\), including one pass
over the stored matrix.  This conditional simplification does not bound the
three counts; it is not a worst-case polynomial result or a comparison theorem
for HiGHS.

\subsection{Extension: Reoptimization Across Objectives}
\label{sec:objective-reuse}

The same idea can transfer state across objectives.  For an integer \(T\geq1\),
consider the finite sequence
\begin{equation}\label{eq:objective-sequence}
  (\mathrm P_t)\qquad
  \max\{(\vct{c}^{\,t})^T\vct{x}:A\vct{x}\leq\mathbf 1\},
  \qquad \norm{\vct{c}^{\,t}}=1,\quad t=1,\ldots,T.
\end{equation}
All problems share \(A\) and \(\Q\); only the unit objective-ray direction
\(\vct{c}^{\,t}\) changes.
RO-CR-P-LPN solves the first LP cold and offers its final corral to the next
objective, repeating this process through the sequence.  Each transfer still
requires repair and global verification.  This extension is motivated by
objective-coefficient sensitivity; Appendix~\ref{app:objective-transfer} gives the
rule; both implementations are included in the versioned computational
archive described at the end of this report.

\begin{corollary}[Finite-sequence correctness]
\label{cor:objective-reuse}
Under the assumptions of \cref{thm:corralreuse}, RO-CR-P-LPN returns the same
finite optimum or unboundedness conclusion for every \((\mathrm P_t)\) as an unrestricted cold solve and
terminates for finite \(T\).  Shared preparation costs
\(O(\operatorname{nnz}(A)+m+n)\) operations and storage.  The remaining work
is the sum of \cref{eq:output-work} over the objectives, including work before
unverified warm exits, and peak storage is
\(O(\operatorname{nnz}(A)+m+n^2)\).
\end{corollary}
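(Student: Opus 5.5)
The plan is to reduce the corollary to \cref{thm:corralreuse} and \cref{prop:output-work}, applied one objective at a time, and to do induction over \(t=1,\ldots,T\). Fix \(t\). The problem \((\mathrm P_t)\) has the same matrix \(A\), the same hull \(\Q\), and the same generator list \(\mathcal I\) as every other member of the sequence. Only the unit direction \(\vct{c}^{\,t}\), and therefore the ray and the initial radius \(\rho_0^{\,t}>\max_i(\vct{c}^{\,t})^T\vct{v}_i\), change. The corral carried over from \((\mathrm P_{t-1})\) is an affinely independent subset of \(\mathcal I\) with positive weights. It is therefore a legitimate input to \(\mathrm{Repair}\) at the first target \(\rho_0^{\,t}\vct{c}^{\,t}\). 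Indeed, the repair procedure needs only a corral of generators of \(\Q\) and a target point; it never uses the fact that the corral came from the same ray.

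The key step is to rerun the inductive argument in the proof of \cref{thm:corralreuse} for \((\mathrm P_t)\), now allowing a warm start already at \(k=0\). Every projection of a target \(\vct{y}_k\notin\Q\) is accepted only after global Wolfe verification, and the case \(\vct{z}_k=\vct{y}_k\) is self-certifying. An unverified warm exit triggers a cold recomputation. So in every case \(\vct{z}_k=\operatorname{Proj}_{\Q}(\vct{y}_k)\), which is unique. Consequently \((\vct{d}_k,h_k)\), the \(\mathrm{StatusUpdate}\) branch, and \(\rho_{k+1}\) coincide with those of a cold P-LPN run on \((\mathrm P_t)\). Induction on \(k\) then gives the same target sequence, the same iteration count, and, by \cref{thm:finite}, termination within \(K(m+1,n)\) outer iterations with the status certified by \cref{thm:polar-equivalence}. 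The returned corral may differ from the cold one when ties occur, and the final row weights \(\vct{u}^*\) may differ as well. The \emph{conclusion} does not differ: the finite value \(1/\rho^*\) or unboundedness is determined by \(\rho^*\) alone. This is exactly why the statement claims equality of the conclusion rather than of the full certificate. Finite \(T\) and termination of each solve give overall termination.

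For the costs, the shared preparation consists of storing or accessing \(A\), the index and score vectors of length \(m+1\), and \(n\)-vectors for the ray. This gives \(O(\operatorname{nnz}(A)+m+n)\) operations and storage, counted once because \(A\) and \(\Q\) do not change. The work within objective \(t\) is bounded by \cref{eq:output-work}, with its own \(G_t,H_t,N_{\rm out}^{\,t}\); the transfer repair at \(k=0\) is just additional affine solves counted in \(H_t\), and aborted warm attempts are included by the convention of \cref{prop:output-work}. Summing over \(t\) gives the stated total. Peak storage stays \(O(\operatorname{nnz}(A)+m+n^2)\), because only one corral of at most \(n+1\) generators, together with its KKT matrix, is carried between objectives and the work arrays are reused.

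The main obstacle I expect is the formal point that \cref{thm:corralreuse} is stated with warm starts only for \(1\le k<B\) and with the corral taken from the same ray. I would handle this by observing that its proof uses only two facts, projection uniqueness and verification before acceptance, and is insensitive to the origin of the initial active set. I would state this explicitly as a one-line generalization rather than reprove it.
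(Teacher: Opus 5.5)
Your proposal is correct and follows the paper's own argument. Transfer alters only Wolfe's initial active set, so global verification or cold recomputation yields the same unique projections, \cref{thm:corralreuse} applies objective by objective, and summing \cref{prop:output-work} gives the work and storage bounds. Your explicit remarks spell out what the paper's one-line proof leaves implicit: the induction never uses the origin of the initial corral, so a warm start at \(k=0\) is harmless, and only the LP conclusion, not necessarily the dual weights, must coincide.
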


\begin{proof}
Transfer changes only Wolfe's initial active set.  The global test or cold
recomputation therefore gives the same projection as a cold solve, so
\cref{thm:corralreuse} applies to each objective.  Summing its work and using
 the implicit-product accounting and \cref{prop:output-work} gives the bounds.\nobreak\hfill
\end{proof}

Changing \(A\) changes \(\Q\) and is outside this result.

\FloatBarrier
\section{Numerical Experiments}\label{sec:numerics}

The experiments separate the projection algorithms from the cost of preparing
the row-polar problem.  Phase-II tests start from the same normalized instance
and compare P-LPN with CR-P-LPN without initialization cost.  End-to-end tests
include initialization through verification.  Component and boundary tests
then examine when repair saves work and when it adds work.

\subsection{Questions and Protocol}

The main experiments address three questions.  How do P-LPN and CR-P-LPN
compare after a common initialization and in complete solves, including against
cold HiGHS dual simplex and IPM with crossover?  Does corral repair help beyond
changing the verification order or projection metric?  Which problem features
make the saved Wolfe work exceed the cost of repair?  A smaller, secondary
experiment asks whether a terminal corral can also help when the constraints
stay fixed and only the objective changes.

\emph{Implementation checks.}
We compare times only for outcomes that pass the original-scale tests.  A
finite result must satisfy primal and dual feasibility, nonnegativity of the
multipliers, the primal--dual gap, and radial agreement.  An unbounded result
must include a normalized feasible recession direction.  Known solutions and
independent solvers provide further checks where available.  Both
implementations return the expected status on the retained examples.  Appendix~\ref{app:verification} defines the residuals, tolerances, and optional multiplier
polishing.

\emph{Comparisons.}
Cold-start P-LPN is the primary baseline.  Component variants separate repair
from the other design choices.  An author-implemented primal-tableau simplex
provides an independent code path, while cold HiGHS dual simplex and IPM with
crossover are mature general-purpose references \citep{HuangfuHall2018}.
Phase-II comparisons begin after common row-polar initialization; end-to-end
comparisons start all methods from the same original arrays.  We do not add
MATLAB R2025b \texttt{linprog} because its default algorithm is HiGHS-based.

\emph{Instances.}
For the component study, 180 single-LP instances with known exposed polar faces cover regular,
diagonally conditioned, and degenerate cases in dimensions 10--100 with at
most 700 rows.  Three prespecified many-row instances have
\((n,m)=(16,5000),(32,25000),(64,100000)\), known exposed optima, and mostly
four to six nonzeros per row.  They test costly scans when \(m\gg n\), not a
general LP class.  Boundary tests include small status examples, Klee--Minty
dimensions three through eight, and a dimension-50 stress case
\citep{KleeMinty1972}.

The application-derived study contains nine box-regularized minimax
regressions, seven frozen tail-margin models, and three long-only maximin
portfolios: 19 LPs with 722--79,405 rows and 6--60 columns
\citep{UCIRepository2026,French2026Data}.  Fixed screening of 91 Netlib models
retains ten for a separate comparison \citep{Gay1985}.  The Netlib subset is a
boundary check rather than a benchmark of the full collection, and the
application-derived tests do not assess predictive quality.  The
secondary transfer study uses 24 twelve-objective sequences at four sizes,
three relevant-generator overlap patterns, and two seeds.  A separate broader
grid has 108 five-objective sequences, and a fixed-constraint 49-industry
sequence provides a further boundary test.
Appendices~\ref{app:comparators}--\ref{app:numerical-details} give comparator settings, verification rules,
and additional numerical results.  A versioned computational archive contains
the data transformations, Netlib screening rules, instance generators, control
definitions, provenance, and checksums; the Data and Code Availability statement
identifies the archive and explains how to obtain it.

\emph{Implementation and timing.}
For numerical stability, a positive diagonal transformation
\(\vct{x}^{\,0}=\bar{\vct{x}}^{\,0}+D\vct{\xi}\) replaces each row by
\((\vct{a}_i^{\,0})^T D/s_i^{\,0}\) and the objective by
\(D\vct{c}^{\,0}\); displacements and
recession directions are mapped back by \(D\).  This equivalent scaling
stabilizes Klee--Minty coefficients of widely different magnitudes.  Disjoint
development sets fix the ray weight at \(0.1\) for the Section~5 metric control
and the candidate-set fraction at \(0.5\); default CR-P-LPN uses the Euclidean
metric.
We use \(B=4\) in the reported experiments; application-derived instances
were not used for tuning.

Every timed run is retained.  Each study uses 3--10 randomized-order
repetitions after one unrecorded warm-up, disjoint seeds, and one solver and one
linear-algebra thread.  Phase-II time begins with the initialized row-polar
problem and includes solver setup, generator scans, unsuccessful warm work, and
any cold recomputation.  End-to-end time begins with the original arrays.  For
P-LPN and CR-P-LPN it includes construction of the artificial problem, Phase I,
any affine-hull reduction and repeated Phase-I call, normalization, Phase II,
inverse transformation, and verification.  For HiGHS it includes model
construction, presolve, optimization, any crossover, solution retrieval, and
verification.  File reading is excluded, and complete times are measured
directly rather than assembled from separate medians.

Application-derived checks use \(10^{-7}\) for normalized residuals and HiGHS
objective differences.  Other tolerances are \(10^{-6}\) for feasibility and
\(10^{-5}\) for relative gap and radial agreement.  These are floating-point
acceptance tests, not exact refinement \citep{GleixnerEtAl2016,EiflerEtAl2025}.
Results use Julia 1.8.3, MATLAB R2025b, and HiGHS 1.14.0 on an Intel Core
i9-9980HK MacBook Pro.  The archive records source hashes, repetitions, thread
settings, and implemented parameter settings.

\FloatBarrier

\subsection{Many-Row Runtime Comparison}
\label{sec:many-row}

The three prespecified many-row instances give the clearest test of expensive
full-row work.  All 120
Phase-II and 90 end-to-end runs return the prespecified optimum and pass the
original-scale tests.  Julia and MATLAB operation counts agree for every
Phase-II instance--method pair.

Phase-II P-LPN/CR-P-LPN geometric-mean time ratios are 2.03
[1.74,2.25] in Julia and 1.46
[1.33,1.56] in MATLAB.  Major-step counts fall from 70 to 25, 123 to 46,
and 215 to 98; full-row scans fall from 72 to 23, 125 to 40, and 217 to 77; and
affine solves fall from 100 to 37, 164 to 63, and 263 to 135.  These reductions
occur in exactly the operations that corral reuse is intended to avoid,
consistent with \cref{prop:output-work}.

\begin{table}[H]
\caption{End-to-end many-row comparison (mostly four to six nonzeros per row).
P/CR is P-LPN total time divided by CR-P-LPN total time; H/CR is the faster cold
HiGHS total time divided by CR-P-LPN total time.
\label{tab:many-row-time}}
\centering
\small\centering\begin{tabular}{lrrrrrrr}
\toprule
Language & $n$ & $m$ & P total & CR total & P/CR & H-fast & H/CR \\
\midrule
Julia & 16 & 5,000 & 0.0101 & 0.0074 & 1.31 & 0.0395 DS & 5.36 \\
Julia & 32 & 25,000 & 0.0926 & 0.0657 & 1.41 & 0.3016 DS & 4.59 \\
Julia & 64 & 100,000 & 1.8109 & 1.1394 & 1.58 & 1.9564 IPM & 1.72 \\
MATLAB & 16 & 5,000 & 0.0231 & 0.0173 & 1.33 & 0.0395 DS & 2.28 \\
MATLAB & 32 & 25,000 & 0.1572 & 0.0998 & 1.49 & 0.3016 DS & 3.02 \\
MATLAB & 64 & 100,000 & 1.8949 & 1.0332 & 1.83 & 1.9564 IPM & 1.89 \\
\bottomrule
\end{tabular}\ \null\par\smallskip
\begin{minipage}{0.98\textwidth}
\scriptsize Entries are medians of five randomized-order, single-thread
runs after an unrecorded run.  DS and IPM identify the faster cold HiGHS mode.
The P/CR entry is the median within-repeat ratio; H/CR uses the displayed
medians.  Complete totals are timed directly.  The saved instances already
have \(\vct{b}^{\,0}=\mathbf 1\) and the visible strict point
\(\vct{x}^{\,0}=\vct{0}\), so
forcing artificial Phase I is conservative for these files.  Appendix~\ref{app:numerical-details},
Table~\ref{tab:supp-many-row}, reports the Phase-I and Phase-II timing components.
Cross-language times are not ranked.
\end{minipage}
\end{table}

End-to-end P-LPN/CR-P-LPN geometric-mean ratios
are \EToEJuliaPCRGM{} in Julia and \EToEMatlabPCRGM{} in MATLAB.  CR-P-LPN is
faster for all three instances in both languages.  The geometric-mean ratios of
the faster cold HiGHS time to the CR-P-LPN total time are
\EToEJuliaHighsCRGM{} in Julia and \EToEMatlabHighsCRGM{} in MATLAB.  P-LPN is
also faster than the faster cold HiGHS mode for all three instances in both
languages.  On the
100,000-row instance, however, the corresponding HiGHS/P-LPN ratios narrow to
1.08 and 1.03.  The many-row advantage therefore survives the forced Phase-I
cost, although P-LPN's margin over HiGHS is small on the largest instance.
Three prespecified instances cannot establish a general solver ranking.

\FloatBarrier
\subsection{Component Evidence for Corral Repair}
\label{sec:ablation}

The component tests separate terminal-corral reuse from the other choices in
Section~5.  The ``Active-face scan'' control changes verification order but
does not repair a corral; CR-P-LPN does both.

\begin{table}[H]
\caption{Evaluation-set corral-repair component comparison.  Speed is paired P-LPN time
divided by method time; a paired win means that the method is faster on that
instance, and CI denotes confidence interval.
\label{tab:adaptive-ablation}}
\centering
\small\centering
\setlength{\tabcolsep}{4pt}
\begin{tabular}{@{}llrrrr@{}}
\toprule
Language & Method & Verified & Geom. mean time & Median speed (95\% CI) & Paired wins \\
\midrule
Julia & P-LPN & 180/180 & 0.005661 & 1.00 [1.00,1.00] & 0 \\
Julia & Corral reuse & 180/180 & 0.003396 & 1.66 [1.56,1.77] & 176 \\
Julia & Active-face scan & 180/180 & 0.005818 & 0.98 [0.97,1.00] & 73 \\
Julia & CR-P-LPN & 180/180 & 0.003433 & 1.62 [1.55,1.69] & 178 \\
\midrule
MATLAB & P-LPN & 180/180 & 0.009079 & 1.00 [1.00,1.00] & 0 \\
MATLAB & Corral reuse & 180/180 & 0.005566 & 1.61 [1.52,1.74] & 177 \\
MATLAB & Active-face scan & 180/180 & 0.009464 & 0.97 [0.95,0.98] & 56 \\
MATLAB & CR-P-LPN & 180/180 & 0.005839 & 1.55 [1.46,1.66] & 172 \\
\bottomrule
\end{tabular}\ \null\par\smallskip
\begin{minipage}{0.98\textwidth}
\scriptsize Times are seconds and medians of three post-warm-up runs.
Intervals use 10,000 paired bootstrap resamples.
\end{minipage}
\end{table}

All 4,320 runs satisfy the applicable verification tests.  Pure corral reuse gives
median paired speedups of 1.66 in Julia and 1.61 in MATLAB.  CR-P-LPN gives
1.62 and 1.55, winning 178/180 and
172/180 instance medians; active-face scanning alone gives 0.98 and 0.97.

The Spearman association between each instance's P-LPN/CR-P-LPN time ratio and
its affine-solve reduction is
\JuliaAffineAssociation{} [\JuliaAffineAssociationLower{},
\JuliaAffineAssociationUpper{}] in Julia and
\MatlabAffineAssociation{} [\MatlabAffineAssociationLower{},
\MatlabAffineAssociationUpper{}] in MATLAB; the corresponding associations for
calls to Wolfe's linear-optimization step are \JuliaScanAssociation{}
[\JuliaScanAssociationLower{},\JuliaScanAssociationUpper{}] and
\MatlabScanAssociation{} [\MatlabScanAssociationLower{},
\MatlabScanAssociationUpper{}].  The 95\% intervals use 10,000 bootstrap
resamples.  These are associations, not causal estimates.

Component comparisons are consistent with terminal-corral reuse as the useful
change.  Candidate-first ordering and metric weighting alone show no consistent
gain, whereas repair reduces Wolfe steps, scans, and affine solves.

\FloatBarrier
\subsection{Conditions and Performance Boundaries}
\label{sec:real-endpoints}

The application-derived end-to-end study has 570 timing records, 190 each
for Julia, MATLAB, and HiGHS.  Every run returns an optimum and passes the
original-scale tests, with maximum relative objective error
\(3.43\times10^{-8}\).  \Cref{tab:application-end-to-end} summarizes the
19 workload medians.

\begin{table}[H]
\caption{End-to-end comparison on 19 application-derived LPs.
\label{tab:application-end-to-end}}
\centering
\scriptsize\centering\begin{tabular}{@{}lcc@{}}
\toprule
& Julia & MATLAB \\
Time ratio & GM [95\% CI], count $>1$ & GM [95\% CI], count $>1$ \\
\midrule
P-LPN/CR-P-LPN & 1.25 [1.17,1.34], 18/19 & 1.09 [1.02,1.16], 12/19 \\
H-fast/P-LPN & 3.93 [2.85,5.30], 18/19 & 2.82 [2.19,3.53], 18/19 \\
H-fast/CR-P-LPN & 4.93 [3.44,6.74], 18/19 & 3.06 [2.35,3.86], 18/19 \\
\bottomrule
\end{tabular}\ \null\par\smallskip
\begin{minipage}{0.98\textwidth}
\scriptsize Entries are geometric means (GM) across workload medians, with
95\% bootstrap confidence intervals (CI) and counts of ratios above one.  Each
workload has five randomized-order repetitions.  H-fast is the smaller cold
HiGHS median from dual simplex and IPM with crossover.  Ratios above one favor
the denominator.
\end{minipage}
\end{table}

Dual simplex is the faster HiGHS mode for all 19 workloads.  The common
exception is the Spambase tail-margin LP, where the H-fast/CR-P-LPN ratios are
0.59 and 0.65 in Julia and MATLAB, respectively.  Thus the comparison is
favorable but does not establish universal solver dominance.
Every row-polar Phase I succeeds in one call, so none of the 19
application-derived LPs requires affine-hull reduction.  The end-to-end
protocol includes that branch when required; Appendix~\ref{app:initialization} gives its algorithm,
and Appendix~\ref{app:numerical-details} gives the designed validation tests.

All runs in the separate \RealEndpointPrimaryRuns-run Phase-II study also
satisfy the verification tests.  Its P/CR geometric means are \JuliaRealCRSpeed\ and
\MatlabRealCRSpeed{}.  CR-P-LPN is faster on 13 Julia and 12 MATLAB workload
medians, but both row-polar methods are slower than HiGHS on the largest
regression.  The gain is less consistent than on the many-row tests.

\label{sec:klee50}

Against the author-simplex implementation on the 180 synthetic problems,
CR-P-LPN time ratios are 0.55 [0.46,0.66] in Julia and 0.63 [0.46,1.05] in
MATLAB; every pair passes the applicable tests.  On ten retained Netlib models, both methods pass all
ten in each language, but CR-P-LPN takes 3.44 and 3.34 times the simplex time;
simplex wins 10/10 Julia and 9/10 MATLAB medians.  These are implementation and
boundary checks, not corral-reuse comparisons.

For the dimension-50 Klee--Minty bounded-zonotope problem,
\citet{FujishigeEtAl2009} report two Newton steps, 12 generated points, and
0.0113 seconds, with optimizer \((0,\ldots,0,100^{49})^T\) and value \(10^{98}\).
Our bounded runs reproduce two steps; our 11 points reflect a different
starting convention, and cross-machine times are not compared.  In scaled
row-polar form, all 160 runs pass the verification tests with optimizer \(\vct{e}_{50}\) and value one.
P-LPN takes five outer iterations, the metric variant four, and CR-P-LPN invokes a
cold restart; CR-P-LPN/P-LPN time ratios are 1.06 in Julia and 1.34 in MATLAB.

Many rows alone do not explain the gain.  Repair helps when checking all
generators is costly \emph{and} successive projections retain useful corral
generators.  If the preceding corral is uninformative, repair, verification,
and cold recomputation can cost more than the Wolfe work they save.  The
application-derived, Klee--Minty, Spambase, and Netlib results therefore rule
out a general dominance claim.

\subsection{Secondary Test: Transfer Across Objectives}
\label{sec:repeated-objective-results}

All \ROPrimaryObjectiveSolves{} row-polar solves satisfy the verification tests.  On the 18 sequences
with at least 5,000 rows, separate CR-P-LPN takes \ROJuliaCRSpeedupMany{} and
\ROMatlabCRSpeedupMany{} times the transferred RO-CR-P-LPN time.  RO-CR-P-LPN
is faster for every sequence median.  Across 108 sequences the ratios are 1.38 and 1.20; they fall to
1.10 and 0.96 under low generator overlap and to
\ROPortfolioJuliaCRSpeedup{} and \ROPortfolioMatlabCRSpeedup{} on the
49-industry sequence.  Transfer is therefore useful on the prespecified
many-row sequences, but weak overlap erodes the gain and the portfolio sequence
produces slowdowns.

\subsection{Evidence Summary}

\Cref{fig:reuse-regime} places the within-language Phase-II results on one
scale; the primary and secondary rows use different baselines, and we do not
rank times across languages.  The evidence supports a deliberately narrow
conclusion.  Corral repair often saves substantial inner Wolfe work, especially
when generator scans are expensive and useful generators persist, but neither
repair nor transfer is uniformly faster.  The end-to-end results show that the
gain can survive initialization; the boundary studies show where it does not.

\begin{figure}[H]
\centering
\resizebox{0.98\textwidth}{!}{\begin{tikzpicture}
\begin{axis}[
  width=0.86\textwidth,
  height=1.75in,
  xmin=0.70,
  xmax=2.10,
  ymin=0.5,
  ymax=8.5,
  xtick={0.75,1.00,1.25,1.50,1.75,2.00},
  ytick={1,2,3,4,5,6,7,8},
  yticklabels={Klee--Minty,Portfolio,Low generator overlap,Broad objective grid,
    Prespecified sequences,Application-derived endpoints,Prespecified many-row,Evaluation components},
  xlabel={Baseline time / reuse time},
  xmajorgrids=true,
  grid style={black!12},
  axis line style={black!55},
  tick label style={font=\scriptsize},
  label style={font=\scriptsize},
  yticklabel style={font=\scriptsize,align=right},
  legend style={at={(0.99,0.98)},anchor=north east,legend columns=-1,
    draw=none,fill=white,fill opacity=0.85,text opacity=1,font=\scriptsize},
  clip=false
]
\addplot[forget plot,black!60,dashed,thick] coordinates {(1,0.5) (1,8.5)};
\addplot[only marks,mark=*,mark size=2.4pt,blue!65!black]
  coordinates {(0.94,1) (0.90,2) (1.10,3) (1.38,4)
    (1.85,5) (\JuliaRealCRSpeed,6) (2.03,7) (1.62,8)};
\addlegendentry{Julia}
\addplot[only marks,mark=square*,mark size=2.3pt,orange!80!black]
  coordinates {(0.75,1) (0.79,2) (0.96,3) (1.20,4)
    (1.48,5) (\MatlabRealCRSpeed,6) (1.46,7) (1.55,8)};
\addlegendentry{MATLAB}
\end{axis}
\end{tikzpicture}}
\caption{Reuse benefit by setting.  Values above one favor reuse.
\label{fig:reuse-regime}}
\begin{minipage}{0.98\textwidth}
\scriptsize Primary ratios are P-LPN/CR-P-LPN; objective-sequence and portfolio
ratios are CR-P-LPN/RO-CR-P-LPN.  Klee--Minty is inverted from CR-P-LPN/P-LPN.
Values below one are slowdowns; Netlib is not a reuse/no-reuse comparison.
\end{minipage}
\end{figure}

\FloatBarrier
\section{Discussion and Solver Management}\label{sec:discussion}

The experiments suggest a practical rule of thumb: reuse is attractive when a
full generator scan is expensive and the preceding corral is likely to remain
informative.  We do not yet have a reliable instance-level predictor, so the
implementation uses a fixed reuse limit \(B\).  Learning an adaptive rule from
the observed repair history is a natural next step.  Transfer across objectives
is more demanding because it also requires the hull to remain unchanged and
the relevant generators to persist.

\subsection*{Limitations}

The timing evidence covers three designed many-row problems, 19
application-derived LPs, and ten retained Netlib models.  This is not a
representative benchmark library and cannot establish broad solver
superiority.

P-LPN and CR-P-LPN assume a known interior feasible point.  The end-to-end
procedure supplies artificial Phase I, but all 19 application-derived LPs are
full dimensional; they therefore say little about the cost of affine-hull
reduction.  That reduction uses a numerical SVD and does not yet reconstruct a
complete original-space dual multiplier.  Finally, the combinatorial bound for
arbitrary real data is exponential.  The rational-data outer count is
polynomial in encoding length, but no polynomial bound is known for the inner
Wolfe algorithm.

\section{Conclusion}\label{sec:conclusion}

This study puts LP-Newton on a compact hull of normalized inequality rows.  The
radial endpoint recovers a finite optimum or proves unboundedness, and for
rational data P-LPN needs \(O(n^2\mathcal L)\) outer iterations.  Corral repair
changes how each projection starts, not which projection is computed; CR-P-LPN
therefore inherits the same outer sequence and bound in exact arithmetic.  No
corresponding polynomial bound is claimed for inner Wolfe work or total time.

Empirically, repair is most effective when checking all generators is costly
and successive corrals share useful generators.  In the end-to-end study of 19
application-derived LPs, CR-P-LPN has lower median total time than the faster
cold HiGHS mode (dual simplex or the interior-point method with crossover) on
18 workloads per language.  The retained Netlib results limit solver-dominance
claims, while the Klee--Minty and objective-transfer results show that reuse can
lose.  Two questions remain open: how to bound the inner Wolfe work and how to
predict from the current projection geometry whether reusing a corral will
save time.

\appendix
\section{Outer-Iteration Counting Details}\label{app:outer-details}

The notation is the same as in Sections~\ref{sec:formulation}--\ref{sec:corralreuse}.  Vectors are bold italic,
matrices are ordinary uppercase italic, and \(\vct{0}\) is the zero vector.
At outer iteration $i$, let
$\vct{y}_i=\rho_i\vct{c}$, let $\vct{z}_i$ be its Euclidean
projection onto the row-polar hull $\Q$, and define
\[
 \vct{d}_i:=\vct{y}_i-\vct{z}_i,\qquad
 h_i:=\max_{\vct{q}\in\Q}\vct{d}_i^T\vct{q}
 =\vct{d}_i^T\vct{z}_i.
\]
The next target is
$\vct{y}_{i+1}=\rho_{i+1}\vct{c}$, where
$\rho_{i+1}:=h_i/(\vct{d}_i^T\vct{c})$.  The terminal Wolfe corral $C_i$ is
affinely independent and represents
$\vct{z}_i=\sum_{j\in C_i}\lambda_j\vct{v}_j$ with positive weights summing
to one.

At a positive terminal target, \(\mathrm{VerifyFinite}\) forms
\(\vct{x}^*:=\vct{d}_{i-1}/h_{i-1}\) and
\(u_j^*:=\lambda_j/\rho_i\), (j=1,\ldots,m), and applies the normalized tests.  The recession
check tests
\(\vct{r}:=\vct{d}/(\vct{c}^T\vct{d})\), returning \(\mathrm{Unbounded}\)
when \(A\vct{r}\leq\vct{0}\) and \(\vct{c}^T\vct{r}=1\), and
\(\mathrm{RecTestFailed}\) otherwise.  Appendix~\ref{app:verification} maps these normalized outputs
to the original variables and gives the experiment checks.

Theorem~\ref{thm:finite} proves directly that the map from a nonterminal outer
iteration to its terminal Wolfe corral is injective.  The proof uses the
supporting hyperplane at the projection, strict radial decrease, and symmetry of the
Euclidean inner product.  The resulting complementary, data-independent count
is
\[
 K(N,n):=1+\sum_{j=1}^{\min\{n+1,N\}}\binom Nj.
\]
Indeed, a hull with \(N=m+1\) listed generators has at most
\(K(N,n)-1\) nonempty affinely independent subsets of size at most \(n+1\).
These bound the nonterminal corrals, and the positive-endpoint branch can make
one final projection with its target in \(\Q\).  Thus P-LPN makes at most
\(K(N,n)\) completed outer iterations.

Theorem~\ref{thm:rational-outer-bound} gives the sharper rational-data count
\(O(n^2\mathcal L)\), hence \(O(\mathcal L^3)\), where \(\mathcal L\) is the
row-polar binary input length.  Its proof separates three ingredients: the
Newton-halving inequality for the ray-distance function, the exact linear
distance segment next to the radial endpoint, and determinant bounds for the
endpoint LP and tangent-cone Gram system.  Theorem~\ref{thm:corralreuse} shows that
verified CR-P-LPN accepts the same unique projection at every target, so it
inherits both outer-iteration counts.  Neither count includes inner Wolfe work
or total time; an unverified warm attempt makes no radial update.
Thus, following the historical motivation in Section~\ref{sec:related}, a strongly
polynomial bound for the complete LP-Newton method remains an open direction.

\section{Projection and Verification Choices: Details}\label{app:projection-details}

Section~\ref{sec:validity-conditions} selects Euclidean projection, corral repair, and global
Wolfe verification, and summarizes the weighted metric and candidate-first
ordering discussed by \citet[Section 5]{Fujishige2019}.  This section gives the
details and comparison variants.  Nonzero-residual projections pass global
verification; zero residual identifies the target as its own projection.

Let \(\vct{v}_0:=\vct{0}\), \(\vct{v}_i:=\vct{a}_i\) for \(i\geq1\), and
\(V:=[\vct{v}_0,\ldots,\vct{v}_m]\).  For an active index set \(C\), the
Euclidean affine minimizer solves
\begin{equation}\label{eq:kkt}
 \min_{\vct{\lambda}}\ \tfrac12
 \norm{V_C\vct{\lambda}-\vct{y}}^2
 \quad\text{s.t.}\quad \mathbf{1}^T\vct{\lambda}=1.
\end{equation}
With \(\vct{z}:=V_C\vct{\lambda}\) and
\(\vct{d}:=\vct{y}-\vct{z}\), global optimality is equivalent to
\[
 \vct{d}^T\vct{v}_i\leq\vct{d}^T\vct{z}
 \quad(i=0,\ldots,m),
\]
with equality for every positive weight.  Wolfe alternates generator insertions
with \eqref{eq:kkt}; minor steps delete coefficients that reach zero.  A zero
residual proves that the target lies in \(\Q\); otherwise every returned
projection must pass the displayed test.  If a warm computation reaches a
numerical or resource limit before verification, it returns no projection;
cold Wolfe recomputes it.  The code uses a rank-revealing KKT alternative and
rejects numerical duplicate generators.

The early-support comparison forms a globally valid separating support from a
provisional inner point, and the Frank--Wolfe comparison uses exact segment
minimization.  Both are capped and followed by exact Wolfe and the final LP
tests.  The former is related to the Naive Separation Algorithm of
\citet{Matsuno2026}; the computational archive contains the complete formulas and code.

For the metric variant, let \(I\) be the \(n\)-by-\(n\) identity and define
\(P_\parallel:=\vct{c}\vct{c}^T\),
\(P_\perp:=I-P_\parallel\), and
\(M_\alpha:=P_\perp+\alpha P_\parallel\), with the fixed comparison value
$\alpha=0.1$.  At \(\vct{y}=\rho\vct{c}\), let
\(\vct{z}_\alpha:=\arg\min_{\vct{q}\in\Q}
\tfrac12(\vct{y}-\vct{q})^TM_\alpha(\vct{y}-\vct{q})\), and define
\(\vct{d}_\alpha:=M_\alpha(\vct{y}-\vct{z}_\alpha)\) and
\(h_\alpha:=\vct{d}_\alpha^T\vct{z}_\alpha\).  The projection condition is
\(\vct{d}_\alpha^T(\vct{q}-\vct{z}_\alpha)\leq0\) for every
\(\vct{q}\in\Q\).  When \(\vct{d}_\alpha^T\vct{c}>0\), its supporting
hyperplane gives the next radius
\(\rho_\alpha^+:=h_\alpha/(\vct{d}_\alpha^T\vct{c})\).

For the radial endpoint \(\vct{q}^*:=\rho^*\vct{c}\), the update is
exactly \(\rho^*\) when \(\vct{q}^*\) maximizes
\(\vct{d}_\alpha^T\vct{q}\) over \(\Q\).  Projection minimality, with
\(\Delta:=\rho-\rho^*>0\), gives
\(\norm{P_\perp\vct{z}_\alpha}_2^2+
\alpha(\rho-\vct{c}^T\vct{z}_\alpha)^2\leq\alpha\Delta^2\), so
\(\vct{z}_\alpha\to\vct{q}^*\) as \(\alpha\downarrow0\).  Finiteness of the
faces of \(\Q\) gives an exact endpoint update for every sufficiently small
positive \(\alpha\).  This is a comparison result, not the default design.

The candidate set \(\mathcal J_k\) contains the repaired corral and may add
generators that were support-tight in two consecutive outer iterations.
Testing it first changes only the verification order; the no-repair control
uses the same order without repair.  Every comparison variant retains its
stated cap, global Wolfe verification, and the final LP tests.

The dual minimum-norm, nearest-pair, and affine-restriction methods cited in
Section~\ref{sec:related} alter the nearest-point problem.  CR-P-LPN instead changes
only Wolfe's initial state for the same Euclidean projection onto \(\Q\), then
verifies globally.

\subsection{Implicit operations and corral repair}

The code stores the normalized matrix \(A\), not
\(V=[\vct{0},\vct{a}_1,\ldots,\vct{a}_m]\).  It evaluates
\[
 V\vct{\lambda}
 =A^T\vct{\lambda}_{1:m},\qquad
 V^T\vct{d}=\begin{bmatrix}0\\
 A\vct{d}\end{bmatrix},
\]
where \(\vct{\lambda}_{1:m}\) excludes the weight of the origin.  Only
current-corral columns are materialized.

For a new target \(\vct{y}^+\),
\(\mathrm{Repair}(\vct{y}^+;C,\vct{\lambda})\) starts from the preceding
corral and its positive weights.  Let \(\vct{\lambda}^{\rm aff}\) be the
solution of \eqref{eq:kkt} at \(\vct{y}^+\).  If it is not positive, repair
uses the ordinary Wolfe minor step
\[
 \theta:=\min\left\{1,
 \min_{i\in C:\lambda_i>\lambda_i^{\rm aff}}
 \frac{\lambda_i}{\lambda_i-\lambda_i^{\rm aff}}\right\},
 \qquad
 \vct{\lambda}^{+}:=(1-\theta)\vct{\lambda}
 +\theta\vct{\lambda}^{\rm aff}.
\]
When \(\theta<1\), the point
\(V_C\vct{\lambda}^{+}\) is the first boundary point on the segment toward
the affine minimizer, and at least one weight is zero.  Repair deletes every
zero-weight generator and resolves \eqref{eq:kkt}.  At most \(|C|-1\) such
deletion passes occur.  The returned pair
\((C^0,\vct{\lambda}^0)\) is only \(\mathrm{WarmWolfe}\)'s initial state,
not an accepted projection.

\subsubsection{Optional exact-reuse interval}

\begingroup
\small
\begin{theorem}[Exact-reuse interval]\label{thm:supp-exact-reuse}
Fix a nonempty affinely independent generator set \(C\).  For \(\rho\geq0\), put
\(P:=V_C\) and define \(\vct{\lambda}_C(\rho)\), \(\nu(\rho)\), and
\(\vct{z}_C(\rho):=P\vct{\lambda}_C(\rho)\) by
\[
 \begin{bmatrix}P^TP&\mathbf1\\\mathbf1^T&0\end{bmatrix}
 \begin{bmatrix}\vct{\lambda}_C(\rho)\\\nu(\rho)\end{bmatrix}
 =\begin{bmatrix}\rho P^T\vct{c}\\1\end{bmatrix}.
\]
With \(\vct{v}_j\), \(j=0,\ldots,m\), denoting all generators, define
\[
 \mathcal I_C:=\{\rho\geq0:\vct{\lambda}_C(\rho)\geq\vct{0},\
 (\vct{z}_C(\rho)-\rho\vct{c})^T
 (\vct{v}_j-\vct{z}_C(\rho))\geq0\ (j=0,\ldots,m)\}.
\]
Then \(\mathcal I_C\) is an interval, possibly empty or unbounded.  For every
\(\rho\in\mathcal I_C\), \(\vct{z}_C(\rho)\) is
\(\operatorname{Proj}_{\Q}(\rho\vct{c})\); if all weights are
positive, \(C\) is its Wolfe corral.
\end{theorem}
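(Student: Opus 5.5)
The plan rests on one observation: every condition defining \(\mathcal I_C\) is affine in \(\rho\). Once that is shown, \(\mathcal I_C\) is an intersection of closed half-lines with \([0,\infty)\), hence an interval. The projection claim then follows from the standard variational characterization already used for the global Wolfe violation \(\Delta\).

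\emph{Step 1: the KKT matrix is nonsingular.} Suppose \(P^TP\vct{x}+t\mathbf 1=\vct{0}\) and \(\mathbf 1^T\vct{x}=0\). Multiplying the first equation by \(\vct{x}^T\) gives \(\norm{P\vct{x}}^2=-t\,\mathbf 1^T\vct{x}=0\). So \(P\vct{x}=\vct{0}\) with \(\mathbf 1^T\vct{x}=0\), and affine independence of the columns of \(P\) forces \(\vct{x}=\vct{0}\). Then \(t\mathbf 1=\vct{0}\) gives \(t=0\). Hence \(\vct{\lambda}_C(\rho)\), \(\nu(\rho)\) are uniquely defined. Because the right-hand side is affine in \(\rho\), both are affine in \(\rho\), and so is \(\vct{z}_C(\rho)=P\vct{\lambda}_C(\rho)\). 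In particular, each constraint \(\lambda_i(\rho)\geq0\) defines a closed half-line, the whole line, or the empty set.

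\emph{Step 2: linearity of the Wolfe inequalities.} This is the step I expect to be the main obstacle, because \((\vct{z}_C(\rho)-\rho\vct{c})^T(\vct{v}_j-\vct{z}_C(\rho))\) looks quadratic in \(\rho\).

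The first block of the KKT system says \(P^T(\vct{z}_C(\rho)-\rho\vct{c})=-\nu(\rho)\mathbf 1\). So \(\vct{z}_C(\rho)-\rho\vct{c}\) has the same inner product, namely \(-\nu(\rho)\), with every generator indexed by \(C\). It is therefore orthogonal to every direction \(\vct{v}_i-\vct{v}_{i'}\) with \(i,i'\in C\).

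Fix any \(i_0\in C\). Since \(\vct{z}_C(\rho)-\vct{v}_{i_0}\) is such a direction (the weights sum to one), we get
\[
 (\vct{z}_C(\rho)-\rho\vct{c})^T(\vct{v}_j-\vct{z}_C(\rho))
 =(\vct{z}_C(\rho)-\rho\vct{c})^T(\vct{v}_j-\vct{v}_{i_0}).
\]
The right-hand side is affine in \(\rho\). Each of the \(m+1\) Wolfe inequalities is therefore again a half-line, the whole line, or empty. Intersecting these finitely many convex subsets of \(\R\) with \([0,\infty)\) gives an interval, possibly empty or unbounded.

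\emph{Step 3: projection and corral.} Take \(\rho\in\mathcal I_C\) and put \(\vct{y}:=\rho\vct{c}\).

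Since \(\vct{\lambda}_C(\rho)\geq\vct{0}\) sums to one, \(\vct{z}_C(\rho)\in\conv\{\vct{v}_i:i\in C\}\subseteq\Q\).

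The defining inequalities give \((\vct{y}-\vct{z}_C(\rho))^T(\vct{v}_j-\vct{z}_C(\rho))\leq0\) for every generator. Taking convex combinations extends this to every \(\vct{q}\in\Q\). This is exactly the projection variational inequality, that is, \(\Delta(\vct{y},\vct{z}_C(\rho))\leq0\). Uniqueness of the Euclidean projection then yields \(\vct{z}_C(\rho)=\operatorname{Proj}_{\Q}(\rho\vct{c})\).

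If in addition all weights are positive, then \(C\) is affinely independent and \(\vct{z}_C(\rho)\) is its affine minimizer, by the KKT system, which is the same as \eqref{eq:kkt}. This minimizer is a strictly positive convex combination of the generators in \(C\). So \(C\) meets the definition of a corral, and since it realizes the projection it is the Wolfe corral there.
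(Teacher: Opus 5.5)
Your proposal is correct and follows the paper's proof essentially step for step. Nonsingularity of the KKT matrix makes \(\vct{\lambda}_C\) and \(\vct{z}_C\) affine in \(\rho\). KKT orthogonality against a fixed generator of \(C\) turns each Wolfe inequality into an affine one, so \(\mathcal I_C\) is an intersection of half-lines. The weight and variational conditions then identify the projection. You additionally prove the nonsingularity and justify the corral claim explicitly, both of which the paper leaves implicit.
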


\begin{proof}
Affine independence makes the KKT matrix nonsingular, so the weights and
\(\vct{z}_C\) are affine in \(\rho\).  For any \(r\in C\), KKT orthogonality gives
 \[
 (\vct{z}_C-\rho\vct{c})^T
 (\vct{v}_j-\vct{z}_C)
 =
 (\vct{z}_C-\rho\vct{c})^T(\vct{v}_j-\vct{v}_r),
\]
which is affine in \(\rho\).  Hence \(\mathcal I_C\) is an intersection of
affine half-lines.  Its weight conditions put \(\vct{z}_C\) in \(\Q\); the
other inequalities are exactly the first-order conditions for Euclidean
projection.

\end{proof}

The interval identifies the target radii for which a fixed corral determines the projection, but Algorithm~\ref{alg:corralreuse} neither computes nor requires it.
\endgroup

Figure~\ref{fig:corral-repair-mechanism} separates this affine repair from acceptance of the new
projection.  At outer iteration \(k\), \(\mathcal J_k\) contains the repaired
corral and at most \(\max\{|C^0|,\lfloor\eta N\rfloor\}\) generators.
\(\mathrm{WarmWolfe}\) tests these generators first and then the remaining
generators.  A violation causes an ordinary Wolfe major step; a numerical or
resource-limit exit before verification triggers cold recomputation at the
same target.  Only iterations two through \(B\) may start warm, and no radial
update is made until the projection is verified.

\subsection{Reuse across changing objective vectors}\label{app:objective-transfer}

For the normalized sequence \((\mathrm P_t)\) in Section~\ref{sec:objective-reuse}, all
objectives share \(A\) and \(\Q\).  Repeated-objective CR-P-LPN (RO-CR-P-LPN)
solves the first objective cold and offers its terminal corral and weights to
the first projection for the next objective.  It transfers no state after an
unbounded or failed status.  Repair, global Wolfe verification, and the output checks in Appendix~\ref{app:verification} remain mandatory; only Wolfe's initial state changes.
The computational archive implements this rule in both languages.

\section{Optional Two-Stage Initialization}\label{app:initialization}
\label{sec:supp-phase-one}

The optional two-stage construction supplies the strictly feasible point used
by the normalization in Section~\ref{sec:formulation}.  It solves an artificial Phase-I LP before
the original objective in Phase II; the end-to-end experiment includes both.
For original data \(A^0,\vct{b}^{\,0}\), choose any
\(\vct{x}^{\rm start}\in\R^n\), put
\[
 \theta^0:=\min\{0,\min_i(b_i^0-(\vct{a}_i^0)^T
 \vct{x}^{\rm start})\}-1,
\]
and solve
\begin{equation}\label{eq:supp-phase-one}
 \max_{\vct{x}^{\,0},\theta}\{\theta:
 A^0\vct{x}^{\,0}+\theta\mathbf1\leq\vct{b}^{\,0},\ \theta\leq1\}
\end{equation}
from the strictly feasible point \((\vct{x}^{\rm start},\theta^0)\).  Its finite
optimum is attained; let \((\vct{x}^{\,0,{\rm I}},\theta^*)\) be an optimizer.  The optimal
value \(\theta^*\) is positive exactly when the original system has an ambient
strict point, negative exactly when it is infeasible, and zero when it is
feasible with empty ambient interior.

The definition of \(\theta^0\) makes every auxiliary inequality strict, so
P-LPN applies.  Feasibility of \((\vct{x}^{\,0},0)\) links
\(\theta^*\geq0\) to original feasibility, while \(\theta^*>0\) is equivalent
to positive slack in every original row.  These observations prove the three
sign conclusions; in the positive case, \(\vct{x}^{\,0,{\rm I}}\) is the Phase-II
interior point.

The zero case is reduced to its affine hull.  For a current system
\(B\vct{z}\leq\vct{d}\), optimal row multipliers \(\vct{\lambda}\geq\vct{0}\)
at \(\theta^*=0\) satisfy
\[
 B^T\vct{\lambda}=\vct{0},\qquad
 \mathbf1^T\vct{\lambda}=1,\qquad
 \vct{d}^T\vct{\lambda}=0.
\]
Consequently every row with \(\lambda_i>0\) is tight at every feasible point.
Parameterizing the null space of these rows preserves the feasible set and
lowers its dimension; zero rows that reduce to \(0\leq0\) are deleted.  Each
zero-margin pass lowers the dimension or deletes a row, so the exact procedure
terminates after at most \(n\) dimension reductions and \(m\) such deletions.

\begin{algorithm}[!t]
\caption{Artificial Phase I with affine-hull reduction}
\label{alg:supp-phase-one}
\small
\begin{algorithmic}[1]
\Require \(A^0,\vct{b}^{\,0}\), any \(\vct{x}^{\rm start}\); maintain
\(\vct{x}^{\,0}=\vct{x}_c^{\,0}+N\vct{z}\)
\State \(\vct{x}_c^{\,0}\gets\vct{0}\), \(N\gets I_n\),
\(R\gets\{1,\ldots,m\}\), \(\vct{z}^0\gets\vct{x}^{\rm start}\)
\Loop
  \If{\(R=\varnothing\)} \State \Return \((\vct{x}_c^{\,0},N,R)\) \EndIf
  \State \(B\gets (A^0)_RN\),
  \(\vct{d}\gets\vct{b}^{\,0}_R-(A^0)_R\vct{x}_c^{\,0}\)
  \State \(\theta^0\gets\min\{0,\min_i(d_i-B_{i:}\vct{z}^0)\}-1\)
  \State Solve Equation~\eqref{eq:supp-phase-one} in the current coordinates, obtaining
  \((\vct{z}^{\rm I},\theta^*)\) and row multipliers \(\vct{\lambda}\)
  \If{\(\theta^*<0\)} \State \Return \(\mathrm{Infeasible}\) \EndIf
  \If{\(\theta^*>0\)}
    \State \Return \((\bar{\vct{x}}^{\,0},N,R)\), where
    \(\bar{\vct{x}}^{\,0}\gets\vct{x}_c^{\,0}+N\vct{z}^{\rm I}\)
  \EndIf
  \State \(\mathcal T\gets\{i\in R:\lambda_i>0\}\), \quad
  \(\vct{x}_c^{\,0}\gets\vct{x}_c^{\,0}+N\vct{z}^{\rm I}\)
  \If{\(\operatorname{rank}(B_{\mathcal T})>0\)}
    \State Choose \(M\) spanning \(\ker(B_{\mathcal T})\), \quad \(N\gets NM\)
  \Else
    \State \(R\gets R\setminus\mathcal T\)
  \EndIf
  \State Delete constant true rows from \(R\), \quad \(\vct{z}^0\gets\vct{0}\)
\EndLoop
\end{algorithmic}
\end{algorithm}

At return, the retained system in coordinates
\(\vct{x}^{\,0}=\bar{\vct{x}}^{\,0}+N\vct{w}\) has \(\vct{w}=\vct{0}\)
strictly feasible.  The Julia and MATLAB routines use
the same procedure with original-scale outcome checks and an SVD rank threshold.
The threshold is suitable for the reported data but does not determine the exact
rank for arbitrarily ill-conditioned inputs.

\section{Comparator and Prespecified Parameters}\label{app:comparators}

The author-implemented simplex code receives the same interior point and
positive diagonal scaling \(D\in\R^{n\times n}\) as P-LPN.  With
\(\vct{s}^{\,0}=\vct{b}^{\,0}-A^0\bar{\vct{x}}^{\,0}>\vct{0}\), the code splits the scaled
coordinate as \(\vct{\xi}=\vct{\xi}^+-\vct{\xi}^-\), with
\(\vct{\xi}^+,\vct{\xi}^-\geq\vct{0}\), and uses the standard-form
matrix \([A^0D\;-A^0D\;I_m]\), where \(I_m\) is the \(m\)-by-\(m\) identity.
The slack columns give the initial basis.  The code
uses Dantzig's rule, deterministic ratio-test ties, Bland's rule after 50
consecutive degenerate pivots, and refactorization every 250 pivots.  Every
answer passes the applicable original-scale primal, dual, nonnegativity, and
gap tests.  Both source files are in the computational archive.  This author-implemented
code is a similar-implementation peer check.  The
primary method comparison is P-LPN versus CR-P-LPN, whose model, Wolfe code,
language, stopping rules, and verification tests are shared.  HiGHS is the external
practical reference and is not used to attribute a time difference to corral
repair.

Testing \(\alpha\in\{0.05,0.10,0.20,0.50,1,2\}\) on 16 development problems
fixes \(\alpha=0.1\) for the metric comparison.  On dimension-50 Klee--Minty,
it reduces the outer and affine counts from 5/23 to 4/22, but this does not
establish a general runtime improvement.

The reported CR-P-LPN experiments use \(B=4\), \(\eta=0.5\), and
\(\alpha=1\).  Correctness holds for every finite \(B\); increasing \(B\)
offers more reuse opportunities but can add repair and verification work.  We
do not claim that \(B=4\) is universally optimal.

\section{Floating-Point Verification}\label{app:verification}

Algorithms~\ref{alg:plpn} and~\ref{alg:corralreuse} return normalized \(\vct{x}^*,\vct{u}^*,\vct{r}\).  With
\(S:=\operatorname{Diag}(\vct{s}^{\,0})\), map them to the original scale by
\[
 \vct{x}^{\,0*}:=\bar{\vct{x}}^{\,0}+\vct{x}^*,\qquad
 \vct{u}^{\,0*}:=\norm{\vct{c}^{\,0}}S^{-1}\vct{u}^*,\qquad
 \vct{r}^{\,0}:=\vct{r}/\norm{\vct{c}^{\,0}}.
\]
Under diagonal equilibration, primal and recession displacements are first
mapped through \(D\), and the dual scaling factor is
\(\norm{D\vct{c}^{\,0}}\).

Bounded outputs with reconstructed original-system multipliers are checked in
the original variables for primal and dual residuals, nonnegativity, objective
gap, and radial agreement; reduced cases use the applicable primal and objective
checks described below.  For a vector, \((\cdot)_+\) denotes the componentwise positive
part; \(\norm{\cdot}_\infty\) and \(\norm{A^0}_\infty\) are the vector and
induced matrix infinity norms.  Define
\begin{align*}
 \eta_p&:=\frac{\norm{(A^0\vct{x}^{\,0}-\vct{b}^{\,0})_+}_\infty}
 {\max\{1,\norm{\vct{b}^{\,0}}_\infty,
 \norm{A^0}_\infty\max(1,\norm{\vct{x}^{\,0}}_\infty)\}},\\
 \eta_d&:=\frac{\norm{(A^0)^T\vct{u}^{\,0}-\vct{c}^{\,0}}_\infty}
 {\max\{1,\norm{\vct{c}^{\,0}}_\infty,
 \norm{A^0}_\infty\max(1,\norm{\vct{u}^{\,0}}_\infty)\}},\\
 \eta_g&:=\frac{|(\vct{b}^{\,0})^T\vct{u}^{\,0}-(\vct{c}^{\,0})^T\vct{x}^{\,0}|}
 {\max\{1,|(\vct{b}^{\,0})^T\vct{u}^{\,0}|,
 |(\vct{c}^{\,0})^T\vct{x}^{\,0}|\}}.
\end{align*}
The first two must not exceed $10^{-6}$, and the gap and radial agreement must
not exceed $10^{-5}$; $\vct{u}^{\,0}\geq-10^{-6}$.  An unbounded answer is
normalized to $(\vct{c}^{\,0})^T\vct{r}^{\,0}=1$ and is returned as \texttt{unbounded} only
if
\[
 \eta_r:=\frac{\norm{(A^0\vct{r}^{\,0})_+}_\infty}
 {\max\{1,\norm{A^0}_\infty\norm{\vct{r}^{\,0}}_\infty\}}\leq10^{-6}.
\]
Otherwise, the status contains \texttt{inaccurate}.  Both implementations
independently apply the applicable original-variable tests.

For the expanded application-derived experiment, the production tolerance
is \(10^{-7}\) for \(\eta_p\), \(\eta_d\), and dual nonnegativity, and
\(10^{-6}\) for \(\eta_g\) and radial agreement.  P-LPN and CR-P-LPN rows
store status, a verification flag, objective error, \(\eta_p\), and the unscaled
dual residual, not every normalized field.  A row is verified only when all
production checks pass.  The two stored error fields and all separately stored
HiGHS verification fields are also tested at \(10^{-7}\).

At a finite endpoint, an optional polishing step uses the positive dual
support \(\mathcal S\).  Here \((A^0)_{\mathcal S}\) and
\(\vct{b}^{\,0}_{\mathcal S}\) retain the rows indexed by \(\mathcal S\),
\(\mathcal S^c\) is its complement, and \({}^\dagger\) is the Moore--Penrose
pseudoinverse.  If \(|\mathcal S|\geq n\) and \((A^0)_{\mathcal S}\) has
full column rank, the step forms
\[
 \widetilde{\vct{x}}^{\,0}:=((A^0)_{\mathcal S})^\dagger
 \vct{b}^{\,0}_{\mathcal S},
 \qquad
 \widetilde{\vct{u}}^{\,0}_{\mathcal S}
 :=(((A^0)_{\mathcal S})^T)^\dagger\vct{c}^{\,0},
 \qquad
 \widetilde{\vct{u}}^{\,0}_{\mathcal S^c}:=\vct{0}.
\]
Multipliers that are negative beyond the feasibility tolerance reject this
candidate.  Tiny negative values are set to zero.  The candidate is retained
only when the maximum normalized primal, dual, nonnegativity, and gap residual
decreases.  It must then pass the full original-scale tests.  The raw
files report whether polishing was used.

The author-implemented simplex code uses the same \(\eta_p,\eta_d,\eta_g\) and
nonnegativity tests after undoing its shift and column scaling.  Its dual vector
is the equality-basis multiplier \(\vct{\pi}\), whose nonnegativity follows
from the reduced costs of the slack columns.  Thus both methods' optimal
outputs satisfy the same primal-dual requirements; radial agreement is checked
additionally for the row-polar methods.

Every final result is checked in original coordinates for primal feasibility
and objective agreement, or for recession feasibility and improvement.
Original-scale dual, nonnegativity, and gap tests exclude affine-hull-reduced
cases because the pipeline does not reconstruct original-system multipliers.

\section{Additional Numerical Evidence}\label{app:numerical-details}

\subsection{Phase-II many-row and end-to-end results}

All 120 Phase-II runs satisfy the stated verification tests; Julia and MATLAB operation counts agree in every
method--instance cell.  The maximum relative objective error and
normalized dual residual are \(1.49\times10^{-13}\) and
\(5.04\times10^{-14}\).  Component controls are consistent with terminal-corral
reuse, rather than candidate-first ordering alone, as the useful component.

The end-to-end experiment has \EToETimedRows{} timed rows
(\EToERowsPerSolver{} each for Julia, MATLAB, and HiGHS); 90 cover the three
many-row instances below.  All satisfy their original-scale tests; maximum
cross-solver relative objective disagreement is
\(\EToEMaxObjectiveDisagreement\).  Both languages reduce all
\EToEBoundaryCases{} zero-margin systems consistently with cold HiGHS, and
\EToERegressionRows{} untimed rows cover further boundary cases.

\begin{table}[ht]
\caption{End-to-end timing components on the prespecified many-row instances.
\label{tab:supp-many-row}}
\centering
\small\centering\begin{tabular}{lrrrrrrrrr}
\toprule
& & & \multicolumn{3}{c}{P-LPN} & \multicolumn{3}{c}{CR-P-LPN} & \\
\cmidrule(lr){4-6}\cmidrule(lr){7-9}
Language & $n$ & $m$ & Phase I & Phase II & Total & Phase I & Phase II & Total & H-fast \\
\midrule
Julia & 16 & 5,000 & 0.0023 & 0.0083 & 0.0101 & 0.0024 & 0.0048 & 0.0074 & 0.0395 \\
Julia & 32 & 25,000 & 0.0116 & 0.0806 & 0.0926 & 0.0138 & 0.0508 & 0.0657 & 0.3016 \\
Julia & 64 & 100,000 & 0.1221 & 1.6996 & 1.8109 & 0.1033 & 0.9962 & 1.1394 & 1.9564 \\
MATLAB & 16 & 5,000 & 0.0059 & 0.0165 & 0.0231 & 0.0065 & 0.0100 & 0.0173 & 0.0395 \\
MATLAB & 32 & 25,000 & 0.0288 & 0.1225 & 0.1572 & 0.0295 & 0.0687 & 0.0998 & 0.3016 \\
MATLAB & 64 & 100,000 & 0.1926 & 1.7193 & 1.8949 & 0.2232 & 0.7909 & 1.0332 & 1.9564 \\
\bottomrule
\end{tabular}\ \null\par\smallskip
\begin{minipage}{0.98\textwidth}
\footnotesize Times are median seconds over five randomized-order,
single-thread runs after one unrecorded run.  Total is measured directly; the
separate component medians need not sum exactly to it.  H-fast is the smaller
median of cold HiGHS dual simplex and cold HiGHS IPM.  The saved files already
have \(\vct{b}=\vct{1}\) and strict point \(\vct{x}=\vct{0}\), so forced
artificial Phase I is conservative for these instances.
\end{minipage}
\end{table}

Geometric-mean total P-LPN/CR-P-LPN ratios are
\EToEJuliaPCRGM{}/\EToEMatlabPCRGM{} in Julia/MATLAB; faster-cold-HiGHS/CR-P-LPN
ratios are \EToEJuliaHighsCRGM{}/\EToEMatlabHighsCRGM{}.  Both row-polar methods
are below H-fast in all six cells, though H-fast/P-LPN narrows to 1.08/1.03 on
the 100,000-row case.

On the 100,000-row problem, allocations are 97.66 MiB for the old normalized
container, 49.59 MiB for the earlier full scan, and 0.76 MiB for the implicit
scan.  Release \texttt{20260809r2} retains the raw records and generated tables available at that freeze.

\subsection{Secondary changing-objective results}

Twenty-four fixed-constraint sequences cross 12 objectives, three
generator-overlap levels, and \(n\in\{8,16,32,64\}\).  All
\ROPrimaryObjectiveSolves{} row-polar and \ROSimplexObjectiveSolves{} simplex
solves pass against known box optima.

For 18 sequences with at least 5,000 rows, separate-CR/RO-CR ratios are
\ROJuliaCRSpeedupMany{} [\ROJuliaCRSpeedupManyLower,\ROJuliaCRSpeedupManyUpper]
in Julia and
\ROMatlabCRSpeedupMany{} [\ROMatlabCRSpeedupManyLower,\ROMatlabCRSpeedupManyUpper]
in MATLAB, with 18/18 wins each.

The broader grid uses \(n\in\{16,32,64\}\), row factors 250, 750, and 1500,
nonzero densities .05 and .20, two seeds, three overlap levels, and five
objectives.  All 972 rows per language satisfy the stated verification tests; low-overlap MATLAB cells reverse
the average benefit (Table~\ref{tab:supp-broad-grid}).

\begin{table}[ht]
\caption{Broad changing-objective grid: separate CR-P-LPN/RO-CR-P-LPN time ratio.
\label{tab:supp-broad-grid}}
\centering
\renewcommand{\arraystretch}{0.88}\small\centering
\begin{tabular}{lrrrr}
\toprule
Generator overlap & Julia CR time/RO time & Julia wins & MATLAB CR time/RO time & MATLAB wins \\
\midrule
All & 1.38 & 99/108 & 1.20 & 82/108 \\
High & 1.63 & 36/36 & 1.43 & 36/36 \\
Medium & 1.45 & 36/36 & 1.27 & 36/36 \\
Low & 1.10 & 27/36 & 0.96 & 10/36 \\
\bottomrule
\end{tabular}\ \null\par\smallskip
\begin{minipage}{0.98\textwidth}
\footnotesize Entries are geometric means and sequence-median wins over three
randomized-order repetitions.
\end{minipage}
\end{table}

All \ROPortfolioObjectiveSolves{} solves satisfy the stated verification tests on
the fixed 14,400-row,
49-industry, 12-month portfolio.  Median P-LPN/CR-P-LPN/RO-CR-P-LPN times are
2.859/2.278/2.523 seconds in Julia and 3.214/3.577/4.512 in MATLAB.  Ratios
\ROPortfolioJuliaCRSpeedup{}/\ROPortfolioMatlabCRSpeedup{} show no transfer gain;
retained-state HiGHS is \ROPortfolioHighsSpeedup{} times faster than cold HiGHS.

\subsection{Primary performance boundaries and component evidence}

All 570 end-to-end rows on the 19 application-derived LPs (190 each for Julia,
MATLAB, and HiGHS) satisfy the stated verification tests.  Julia/MATLAB total P-LPN/CR-P-LPN ratios are
1.25 [1.17,1.34] (18/19 wins) and 1.09 [1.02,1.16] (12/19 wins);
H-fast/CR-P-LPN ratios are 4.93 and 3.06 (18/19 wins each), with Spambase the
common exception.

The separate 570-run Phase-II study also satisfies the verification tests: Julia/MATLAB P-LPN/CR-P-LPN
ratios are \JuliaRealCRSpeed/\MatlabRealCRSpeed{} (13/19 and 12/19 wins).  All
4,320 component runs satisfy the verification tests; pure-reuse, candidate-first, and full CR-P-LPN
ratios are 1.66/1.61, 0.98/0.97, and 1.62/1.55.

All 120 bounded-zonotope and 160 scaled-polar dimension-50 Klee--Minty runs
pass; the latter's maximum relative error is \(1.04\times10^{-8}\).
Baseline LP-Newton uses two projections and eleven affine/linear-optimization
steps; Julia/MATLAB CR-P-LPN/P-LPN ratios 1.06/1.34 show slower repair.  On ten
retained Netlib models, simplex wins 10/10 and 9/10 medians.

\section*{Data and Code Availability}
\begingroup\small
Computational materials for this version are available from the
corresponding author at
\href{mailto:shi@rs.tus.ac.jp}{\texttt{shi@rs.tus.ac.jp}}.
The archive contains the Julia and MATLAB implementations, instance
generators, raw results, and scripts used to produce and verify the
reported tables. Third-party software and datasets are identified in
the documentation but are not redistributed.
\endgroup

\bibliographystyle{plainnat}

\end{document}